\newtheorem{theorem}{Theorem}
\newtheorem{corollary}[theorem]{Corollary}
\newtheorem{definition}[theorem]{Definition}
\newtheorem*{example*}{Example}
\newtheorem{lemma}[theorem]{Lemma}
\newtheorem{prop}[theorem]{Proposition}
\newcommand{\erre}{\mathbb{R}}
\newcommand{\p}{\mathbb{P}}
\newcommand{\acca}{\mathbb{H}}
\newcommand{\esse}{\mathbb{S}}
\newcommand{\ricc}{\operatorname{Ric}}
\newcommand{\sgn}{\operatorname{sgn}}
\newcommand{\hess}{\operatorname{hess}}
\newcommand{\Hess}{\operatorname{Hess}}
\newcommand{\di}{\mathrm{d}}
\newcommand{\ra}{\rightarrow}
\newcommand{\norm}[1]{{\|#1\|}}              
\newcommand{\pair}[1]{\langle#1\rangle}
\renewcommand{\hat}[1]{\widehat{#1}}
\begin{document}

\title[A sharp height estimate]{A sharp height estimate for compact hypersurfaces with constant $k$-mean curvature in warped product spaces}

\author[S. C. Garc\'ia-Mart\'inez]{Sandra C. Garc\'ia-Mart\'inez}
\address{Departamento de Matem\`aticas, Universidad de Murcia, Campus de Espinardo, 30100 Espinardo, Murcia, Spain.}
\email{scarolinagarciam@gmail.com}
\thanks{This work was partially supported by MINECO-FEDER project MTM2012-34037 and Fundaci\'{o}n S\'{e}neca
project 04540/GERM/06, Spain.
This research is a result of the activity developed within the framework of the Programme in Support of Excellence Groups of the Regi\'{o}n de Murcia, Spain, by Fundaci\'{o}n S\'{e}neca, Regional Agency
for Science and Technology (Regional Plan for Science and Technology 2007-2010). 
S. Carolina Garc\'\i a-Mart\'\i nez was supported by a research training grant within the framework of the programme
Research Training in Excellence Groups GERM by Universidad de Murcia. M. Rigoli was partially supported by MEC Grant SAB 2010-0073.}

\author[D. Impera]{Debora Impera}
\address{Dipartimento di Matematica e Applicazioni,
Universit\`a
degli studi di Milano Bicocca, via Cozzi 53, I-20125 Milano, Italy.}
\email{debora.impera@unimib.it}
\author[M. Rigoli]{Marco Rigoli}
\address{Dipartimento di Matematica,
Universit\`a
degli studi di Milano, via Saldini 50, I-20133 Milano, Italy.}
\email{marco.rigoli@unimi.it}
\date{\today}

\begin{abstract}
In this paper we obtain a sharp height estimate concerning compact hypersurfaces 
immersed into warped product spaces  
with some constant higher order mean curvature, 
and whose boundary is contained into a slice. We apply these results to draw topological conclusions 
at the end of the paper.
\end{abstract}

\maketitle

\section{Introduction}
In recent years height estimates for constant mean curvature graphs have been studied by several authors, since they are 
intimately related to important properties of the geometry of submanifolds. The first result in this 
direction is due to Heinz 
\cite{Heinz} who proved that a compact graph of positive constant mean curvature $H$ in Euclidean $3$-space with 
boundary on a plane can reach at most height $1/H$ from the plane. 
More recently an optimal bound was also obtained for compact graphs and also for compact embedded surfaces with constant mean curvature and 
boundary on a plane in the 3-dimensional hyperbolic space by N. Korevaar, R. Kusner, W. Meeks 
and B. Solomon \cite{KKMS}. In the case of a Riemannian product
$\erre\times\p^2$, with $\p^2$ any Riemannian surface, height estimates 
were exhibited by Hoffman, de Lira and Rosenberg in \cite{HLR} and by
Aledo, Espinar and G\'alvez in \cite{AEG}. 

The natural generalization of the mean curvature for an $n$-dimensional hypersurface are the $k$-mean 
curvatures $H_k$, $k=1,\ldots,n$, that are defined via the elementary symmetric functions of the principal curvatures of the immersion. 
Therefore, it is natural to extend the previous results to the case of constant higher order 
mean curvature. This was done first by Rosenberg in \cite{Rosenberg}, where he proved estimates for the 
height function of compact hypersurfaces with positive constant 
$k$-mean curvature $H_k$ embedded either into the euclidean or the hyperbolic space. 
Later, the same author and Cheng, \cite{ChengRosenberg}, found height estimates for compact vertical graphs with 
positive constant $k$-mean curvature in the product manifold 
$\erre\times\p$ and boundary on a slice, that is, on a submanifold of the form 
$\p_\tau=\{\tau\}\times\p$ for some $\tau\in\erre$. Some of the results in \cite{ChengRosenberg} where later 
improved by Espinar, G\'alvez and Rosenberg, \cite{EGR}, who proved sharp estimates and
horizontal estimates for positive extrinsic 
curvature surfaces in product manifolds. 
Finally, Al\'ias and Dajczer in \cite{aliasdajczer} gave height estimates in the case of 
compact hypersurfaces of positive constant mean curvature immersed into general warped product spaces 
and boundary on a slice, generalizing for $k=1$, the previous results obtained by Cheng and Rosenberg. Our aim 
is to complete the picture described above extending the results of Al\'ias and Dajczer to the case of compact hypersurfaces  
of constant positive $k$-mean curvature, $2\leq k\leq n$, in warped product spaces. 

We finally observe that, in \cite{ChengRosenberg} height estimates
were used to obtain topological properties of properly embedded hypersurfaces without 
boundary. More precisely, Cheng and Rosenberg proved that a hypersurface of constant $k$-mean curvature properly embedded in a 
product $\erre\times\p$, where $\p$ is a compact manifold with non-negative sectional curvature, has at least two ends, or, 
equivalently, it can not lie in a 
half-space. Furthermore, some topological restrictions using height and horizontal estimates were 
obtained in \cite{EGR}.

In this circle of ideas in the last part of this paper, we apply our height estimates in order to prove
non-existence results for properly immersed complete hypersurfaces without boundary in pseudo-hyperbolic spaces contained 
in a half-space.

\section{Preliminaries}
Let $f:\Sigma \rightarrow M^{n+1}$ be a connected hypersurface isometrically immersed into the Riemannian manifold
$M^{n+1}$. Let $A$ denote the second fundamental form of the immersion with respect to a (locally defined) normal
vector field $N$. Its eigenvalues,
$\kappa_1,\ldots,\kappa_n$, are the principal curvatures of the hypersurface (in the direction of $N$). Their elementary symmetric functions $S_k$, 
$k=0,...,n$, $S_0=1$, define the $k$-mean curvatures of the immersion via the formula
$$
H_k= {n \choose k}^{-1}S_k.
$$
Thus $H_1=H$ is the mean curvature, $H_n$ is the Gauss-Kronecker curvature and $H_2$ is, when the ambient space is
Einstein, a multiple of the scalar curvature up to an additive constant.

The Newton tensors associated to the immersion are inductively defined by
$$
P_0=I, \qquad P_k=S_k I-AP_{k-1}.
$$
For further use note that $\mathrm{Tr}P_k=(n-k)S_k$ and $\mathrm{Tr}AP_k=(k+1)S_{k+1}$.
In the sequel, we will need the operators $P_k$ to be globally defined on $T\Sigma$. Obviously, the second fundamental form
$A$ depends on the chosen local unit field $N$.  However, when $k$ is even the sign of $S_k$ (and hence $H_k$) does not
depend on the chosen $N$ which, by its very definition,  implies that the operator $P_k$ is a globally defined tensor field on $T\Sigma$. On the
other hand, when $k$ is odd in order to have $P_k$ globally defined, we need to assume that $\Sigma$ is
\textit{two-sided}. Recall that a hypersurface $f:\Sigma \rightarrow M^{n+1}$ is called two-sided if its normal bundle is trivial, i.e. there exists a
globally defined unit normal vector field $N$ on $\Sigma$. For instance, every hypersurface with never vanishing mean curvature is trivially
two-sided. 
In this last case, a choice of $N$ on $\Sigma$ makes the second fundamental form $A$ and its associated Newton tensors $P_k$ globally defined tensor fields
on $T\Sigma$.

Let $\nabla$ stand for the Levi-Civita connection of $\Sigma$. For a given function
$u\in C^{2}(\Sigma)$,
we denote by $\hess{u}:T\Sigma\rightarrow T\Sigma$ the symmetric operator given by $\hess{u}(X)=\nabla_X\nabla u$ for every $X\in T\Sigma$, and by
$\Hess{u}:T\Sigma\times T\Sigma\rightarrow C^{\infty}(\Sigma)$ the metrically equivalent bilinear form given by
\[
\Hess{u}(X,Y)=\pair{\hess{u}(X),Y}.
\]
Associated to each globally defined Newton tensor $P_k:T\Sigma\ra T\Sigma$, we consider the second order
differential operator $L_k:\mathcal{C}^{\infty}(\Sigma)\rightarrow\mathcal{C}^{\infty}(\Sigma)$ given by
$L_k=\mathrm{Tr}(P_k \circ \hess)$. In particular, $L_0$ is the Laplace-Beltrami operator $\Delta$. Observe that
\[
L_k(u)=\mathrm{div}(P_k\nabla u)-\pair{\mathrm{div}P_k,\nabla u},
\]
where $\mathrm{div}P_k=\mathrm{Tr}\nabla P_k$. This implies that $L_k$ is elliptic if and only if 
$P_k$ is positive definite and in this case the maximum principle holds for $L_k$. See for instance 
\cite[Theorem 3.1]{GT}.

Note that the ellipticity of the operator $L_1$ is guaranteed by the assumption $H_2>0$. Indeed, if this happens the mean curvature does not vanish on $\Sigma$ because of the basic inequality $H_1^2\geq H_2$. Therefore,
the immersion is two-sided and we can choose the normal unit vector $N$ on $\Sigma$ so that $H_1>0$. Furthermore, indicating with $k_j$ the eigenvalues of $A$,
\[
n^2H_1^2=\sum_{j=1}^n\kappa_j^2+n(n-1)H_2>\kappa_i^2
\]
for every $i=1,\ldots, n$, and then the eigenvalues of $P_1$, say $\mu_{1,i}$, satisfy $\mu_{1,i}=nH_1-\kappa_i>0$ for every $i$
(see, for instance, \cite[Lemma 3.10]{elbert}). This shows ellipticity of $L_1$. Regarding the ellipticity of $L_j$ when
$j\geq 2$, we assume the existence of an elliptic point in $\Sigma$, that is, a point 
at which the second fundamental form $A$
is positive definite with respect to an appropriate choice of $N$
. The existence of an elliptic point implies that $H_k$ is positive at that point, and applying G{\.a}rding
inequalities \cite{Ga}, we have
\begin{equation}
\label{garding}
H_1\geq H_2^{1/2}\geq\cdots\geq H_{k-1}^{1/(k-1)}\geq H_k^{1/k}>0,
\end{equation}
with equality at any stage only for an umbilical point. 
Therefore, in case $H_k$ is constant, the immersion is two-sided and $H_1>0$ for the chosen
orientation.
Moreover, in this case, the operators $L_{j}$ for every $1\leq j\leq k-1$ are elliptic or, equivalently, the
operators $P_j$ are positive definite (see \cite[Proposition 3.2]{barbosacolares}). Observe that the 
existence of an elliptic point is not guaranteed, in general, even in the compact case. For instance, 
it is clear that totally geodesic spheres and Clifford tori in $\esse^{n+1}$ are examples of compact 
isoparametric hypersurfaces without elliptic points. On the contrary, it is not difficult to see that 
every compact hypersurface in an open hemisphere has elliptic points (see the proof of Theorem 11.1 in \cite{aliasliramalacarne}).

In what follows, we consider the case when the ambient space is a warped product $M^{n+1}=I\times_{\rho} \p^n$,
where $I\subseteq\erre$ is an open interval, $\p^n$ is a complete $n$-dimensional Riemannian manifold and
$\rho:I \ra \erre_{+}$ is a smooth function.
The product manifold $I\times \p^n$ is endowed with the Riemannian metric
$$
\pair{,}=\pi_{I}^{*}(\di t^2)+\rho^2(\pi_{I})\pi_{\p}^*(\pair{,}_{\p}).
$$
Here $\pi_{I}$ and $\pi_{\p}$ denote the projections onto the corresponding factor and $\pair{,}_{\p}$ is the Riemannian
metric on $\p^n$. In particular, $M^{n+1}=I\times_{\rho} \p^n$ is complete if and only if $I=\erre$. We also observe that
each leaf $\p_t=\left\{t\right\} \times \p^n$ of the foliation $ t \ra \p_t$ of $M^{n+1}$ is a complete totally umbilical
hypersurface with constant $k$-mean curvature
$$
\mathcal{H}_k(t)=\Big( \frac{\rho'(t)}{\rho(t)}\Big)^k,\qquad 0 \leq k \leq n,
$$
with respect to the unit normal $T=-\partial/\partial t$.

Let $f:\Sigma \ra M^{n+1}=I \times_{\rho} \p^n$ be an isometrically immersed hypersurface.
We define the \textsl{height function}  $h \in C^{\infty}(\Sigma)$ by setting $h=\pi_{I} \circ f$. In this context and following the terminology
introduced in \cite{aliasdajczer2}, we will say that the hypersurface is \textsl{contained in a slab} if $f(\Sigma)$ lies between two leaves $\p_{t_1}, \p_{t_2}$ with $t_1<t_2$ of
the foliation. Finally, we define the \textit{angle function} $\Theta:\Sigma\rightarrow[-1,1]$ by
\[
\Theta=\pair{N,T}. 
\]
It is useful to recall that $\norm{\nabla h}^2=1-\Theta^2$. Furthermore we observe that if $f$ is locally a graph over $\p^n$ (that is, transversal to $T$) then either $\Theta<0$ or $\Theta>0$ along $f$. Thus, the assumption that $\Theta$ has constant sign is generally weaker than that of $f$ being a local graph. This assumption will be used in many of the results below.

We conclude this section stating a computational result that will be useful in order to prove our main theorems.
For a proof see \cite[Proposition 6]{aliasimperarigoli} and \cite[Lemma 27]{aliasimperarigoli}.

\begin{lemma}
\label{lemmasigmahtheta}
Let $f: \Sigma \ra M^{n+1}=I \times_{\rho} \p^n$ be an isometric immersion into a warped product space and let $h$ be the height function.
\begin{itemize}
 \item[$(i)$] Define
$$
\sigma(t)=\int_{t_0}^t \rho(u) \di u.
$$
Then
\begin{equation}\label{hessh}
\Hess h(X,Y)=\mathcal{H}(h)(\pair{X,Y}-\pair{\nabla h,X}\pair{\nabla h,Y})+\Theta \pair{AX,Y},
\end{equation}
for each pair of vector fields $X$ and $Y$ on $T\Sigma$,
\begin{equation}\label{lrh}
L_k h=\mathcal{H}(h)(c_kH_k-\pair{P_k\nabla h,\nabla h})+c_k\Theta H_{k+1},
\end{equation}
and
\begin{equation}\label{lrsigma}
L_k \sigma(h)=c_k\rho(h)(\mathcal{H}(h)H_k+\Theta H_{k+1}),
\end{equation}
where $c_k=(n-k){n \choose k}=(k+1){n \choose {k+1}}$, $\mathcal{H}(t)=\rho'(t)/\rho(t)$.\\
\item[$(ii)$]
Let $\hat{\Theta}=\rho \Theta$. Then
\begin{align*}
L_k \hat{\Theta}=&-{n \choose {k+1}}\rho(h)\pair{\nabla h, \nabla H_{k+1}}-\rho'(h)c_k H_{k+1}\\
&-\hat{\Theta}\mathcal{H}'(h)(\norm{\nabla h}^2c_kH_k-\pair{P_k \nabla h,\nabla h})-\frac{\hat{\Theta}}{\rho(h)^2}\beta_k\\
&-\hat{\Theta}{n \choose{k+1}} (nH_1H_{k+1}-(n-k-1)H_{k+2}),
\end{align*}
where
$$
\beta_k=\sum_{i=1}^n\mu_{k,i}K_{\p}({\pi_{\p}}_*E_i,{\pi_{\p}}_*N)\norm{{\pi_{\p}}_*E_i\wedge {\pi_{\p}}_*N}^2.
$$
Here $K_{\p}$ denotes the sectional curvature of $\p$ and the $\mu_{k,i}$'s stand for the eigenvalues of $P_k$  with respect to
a local orthonormal frame $\{E_1,\ldots,E_n\}$ diagonalizing $A$ (and hence $P_k$).
\end{itemize}
\end{lemma}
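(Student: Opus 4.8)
The whole computation is driven by the closed conformal vector field $\mathcal{V}=\rho(\pi_I)\,\partial_t$ of $M^{n+1}=I\times_\rho\p^n$, where $\partial_t=\partial/\partial t$. From the standard warped--product connection identity $\ol{\nabla}_X\partial_t=\mathcal{H}(\pi_I)\bigl(X-\pair{X,\partial_t}\partial_t\bigr)$, valid for every $X$ (here $\ol{\nabla}$ is the Levi--Civita connection of $M$), one gets $\ol{\nabla}_X\mathcal{V}=\rho'(\pi_I)X$, so $\mathcal{V}$ is closed and conformal with factor $\rho'(\pi_I)$. Restricting to $\Sigma$ and using $T=-\partial_t$, $\Theta=\pair{N,T}$, I would first record the bookkeeping relations $\nabla h=(\partial_t)^{\top}=\partial_t+\Theta N$, $\norm{\nabla h}^2=1-\Theta^2$, $\hat{\Theta}=-\pair{\mathcal{V},N}$ and, since $\sigma'=\rho$, $\nabla\sigma(h)=\rho(h)\nabla h=\mathcal{V}^{\top}$. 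I will not track signs below, as they are fixed consistently by the chosen conventions for $N$, $A$ and $\Theta$.

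Part $(i)$ is then immediate. Differentiating $\nabla h=\partial_t+\Theta N$ along $\Sigma$, for $X,Y\in T\Sigma$ one has $\Hess h(X,Y)=\pair{\ol{\nabla}_X\partial_t,Y}+\Theta\pair{\ol{\nabla}_XN,Y}$, and inserting the connection identity together with the Weingarten formula yields \eqref{hessh}. Taking the $P_k$--trace, $L_kh=\sum_i\Hess h(E_i,P_kE_i)$ in an orthonormal frame diagonalizing $A$ (hence $P_k$, say $P_kE_i=\mu_{k,i}E_i$), and using $\mathrm{Tr}P_k=(n-k)S_k$, $\mathrm{Tr}(AP_k)=(k+1)S_{k+1}$ together with the definitions of $c_k$ and $H_k$, gives \eqref{lrh}. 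Finally $\hess\sigma(h)(X)=\rho'(h)\pair{\nabla h,X}\nabla h+\rho(h)\hess h(X)$; taking the $P_k$--trace and using $\rho'=\rho\mathcal{H}$ to cancel the two terms carrying $\pair{P_k\nabla h,\nabla h}$ gives \eqref{lrsigma}.

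For part $(ii)$ I would start from $\nabla\hat{\Theta}=-A\nabla\sigma(h)$, obtained by differentiating $\hat{\Theta}=-\pair{\mathcal{V},N}$ and using that $\mathcal{V}$ is closed conformal (the term $\pair{\ol{\nabla}_X\mathcal{V},N}$ drops since $\ol{\nabla}_X\mathcal{V}$ is proportional to $X\perp N$). Hence $\hess\hat{\Theta}(X)=-(\nabla_XA)\nabla\sigma(h)-A\,\hess\sigma(h)(X)$, so $L_k\hat{\Theta}$ splits into an \emph{algebraic} piece $-\sum_i\pair{A\,\hess\sigma(h)(E_i),P_kE_i}$ and a \emph{curvature} piece $-\sum_i\pair{(\nabla_{E_i}A)\nabla\sigma(h),P_kE_i}$. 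For the algebraic piece, inserting $\hess\sigma(h)$ from $(i)$ and cancelling once more via $\rho'=\rho\mathcal{H}$ collapses it to $-\rho(h)\mathcal{H}(h)\,\mathrm{Tr}(AP_k)-\hat{\Theta}\,\mathrm{Tr}(A^2P_k)$; the first summand is $-\rho'(h)c_kH_{k+1}$, and for the second I would use the classical identity $\mathrm{Tr}(A^2P_k)=S_1S_{k+1}-(k+2)S_{k+2}=\binom{n}{k+1}\bigl(nH_1H_{k+1}-(n-k-1)H_{k+2}\bigr)$, recovering the second and last lines of the stated formula. For the curvature piece, I would commute derivatives via the Codazzi equation $(\nabla_XA)Y-(\nabla_YA)X=(R^M(X,Y)N)^{\top}$, so that $(\nabla_{E_i}A)\nabla\sigma(h)=(\nabla_{\nabla\sigma(h)}A)E_i+(R^M(E_i,\nabla\sigma(h))N)^{\top}$; summing over $i$, the first contribution is $\mathrm{Tr}\bigl(P_k\circ\nabla_{\nabla\sigma(h)}A\bigr)=\nabla\sigma(h)(S_{k+1})=\rho(h)\binom{n}{k+1}\pair{\nabla h,\nabla H_{k+1}}$ by the Newton--operator identity $\mathrm{Tr}(P_k\circ\nabla_YA)=Y(S_{k+1})$ (this is the first term of the formula), while the second is the curvature contraction $\rho(h)\sum_i\mu_{k,i}\pair{R^M(E_i,\nabla h)N,E_i}$.

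It remains to evaluate this contraction using the curvature tensor of the warped product $I\times_\rho\p^n$. Decomposing $\nabla h$ and $N$ into their $\partial_t$--parts and their $\p^n$--horizontal parts and expanding $R^M(E_i,\nabla h)N$ by the O'Neill warped--product curvature formulas, the terms not involving $K_\p$ recombine --- using $\mathcal{H}'=\rho''/\rho-\mathcal{H}^2$ and the identity $\norm{\nabla h}^2\mathrm{Tr}P_k-\pair{P_k\nabla h,\nabla h}=\sum_i\mu_{k,i}\norm{\nabla h\wedge E_i}^2$ --- into $-\hat{\Theta}\,\mathcal{H}'(h)\bigl(\norm{\nabla h}^2c_kH_k-\pair{P_k\nabla h,\nabla h}\bigr)$, while the terms built from $K_\p$, after projecting with $\pi_{\p}$, assemble into $-\hat{\Theta}\rho(h)^{-2}\beta_k$. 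I expect this last step to be the main obstacle: keeping the Codazzi commutation consistent and carrying the full warped curvature tensor through, so that \emph{every} non--algebraic term lands exactly in the $\mathcal{H}'$ or the $\beta_k$ slot, with the correct signs and with nothing left over. A detailed execution of this scheme is given in \cite[Proposition~6 and Lemma~27]{aliasimperarigoli}.
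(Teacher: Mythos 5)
The paper itself does not prove Lemma~\ref{lemmasigmahtheta}: it only refers the reader to Proposition~6 and Lemma~27 of \cite{aliasimperarigoli}, so there is no in-paper proof to compare against. Your sketch reconstructs exactly the argument that reference uses (and that the paper implicitly relies on): the closed conformal field $\mathcal{V}=\rho\,\partial_t$ with $\ol\nabla_X\mathcal{V}=\rho'X$, the identities $\nabla h=(\partial_t)^{\top}$, $\nabla\sigma(h)=\mathcal{V}^{\top}$, $\hat\Theta=-\pair{\mathcal{V},N}$, the resulting diagonalization $\hess\sigma(h)=\rho' I+\hat\Theta A$ (up to the sign convention on $A$, which you correctly flag), the Codazzi split of $L_k\hat\Theta$ into the algebraic part $\rho'\operatorname{Tr}(AP_k)+\hat\Theta\operatorname{Tr}(A^2P_k)$ and the curvature part, the Newton-tensor identity $\operatorname{Tr}(P_k\nabla_YA)=Y(S_{k+1})$, and the trace identity $\operatorname{Tr}(A^2P_k)=S_1S_{k+1}-(k+2)S_{k+2}=\binom{n}{k+1}\bigl(nH_1H_{k+1}-(n-k-1)H_{k+2}\bigr)$ are all correct and account for every term of the formula except the $\mathcal{H}'$ and $\beta_k$ slots. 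The one place you stop short --- expanding $\sum_i\mu_{k,i}\pair{R^M(E_i,\nabla h)N,E_i}$ via the O'Neill formulas and checking that it lands exactly in those two slots --- is the genuinely computational step, and you defer it to \cite{aliasimperarigoli}, which is the same thing the paper does. So your proposal is a faithful condensation of the intended proof rather than a different route, and the one remaining step is correctly attributed rather than skipped.
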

\section{Height estimates in warped products and pseudo-hyperbolic spaces}
The aim of this section is to prove height estimates for compact hypersurfaces in warped product spaces with boundary contained in a slice. We begin by introducing a well-known tangency principle due to Fontenele and Silva \cite{fontenelesilva}. We recall first the following definition taken from \cite{fontenelesilva}.

\begin{definition}
Let $\Sigma_1$ and $\Sigma_2$ be hypersurfaces of $M^{n+1}$ that are tangent at $p$, i.e. which satisfy $T_p\Sigma_1=T_p\Sigma_2$. Fix a unitary vector $\eta_0$ normal to $\Sigma_1$ at $p$. We say that $\Sigma_1$ remains above $\Sigma_2$ in a neighbourhood of $p$ with respect to $\eta_0$ if, when we parametrize $\Sigma_j$, $j=1,2$, on a small neighbourhood $W_j$ of $0$ in $T_p\Sigma_j$ by 
\[
\varphi_j(x)=\exp_p(x+\mu_j(x)\eta_0),
\] 
for some smooth function $\mu_j:W_j\rightarrow\mathbb{R}$ satisfying $\mu_j(0)=0$, the functions $\mu_j$ satisfy $\mu_1(x)\geq\mu_2(x)$ in a sufficiently small neighbourhood of $0\in T_p\Sigma_1$.
\end{definition}
Observe that the fact that 
$\Sigma_1$ lies above $\Sigma_2$ in a neighbourhood of a point $p$ is equivalent to requiring that 
the geodesics of $M$ that are normal to the hypersurface which is totally geodesic at $p$ 
(namely $\mathrm{exp}_p(W)$), in a neighbourhood of $p$ intercept $\Sigma_2$ before $\Sigma_1$.

For hypersurfaces, Fontenele and Silva \cite{fontenelesilva} proved the following tangency principle

\begin{theorem}[Theorem 1.1 in \cite{fontenelesilva}]\label{tanprin}
Let the hypersurfaces $\Sigma_1$ and $\Sigma_2$ of $M^{n+1}$ be tangent at an interior point $p$ and let $\eta_0$ be a unitary vector that is normal to $\Sigma_1$ at $p$. Suppose that $\Sigma_1$ remains above $\Sigma_2$ in a neighbourhood of $p$ with respect to $\eta_0$. Denote by $H_k^{\Sigma_1}$ and $H_k^{\Sigma_2}$ the $k$-mean curvatures of $\Sigma_1$ and $\Sigma_2$, respectively. Assume that 
for some $k$, $1\leq k\leq n$, we have $H_k^{\Sigma_2}\geq H_k^{\Sigma_1}$
in a neighbourhood of zero and, if $k\geq 2$, that $\lambda^{\Sigma_2}(0)$, the principal curvature vector of $\Sigma_2$ at zero, belongs to $\Gamma_k$, that is to the connected component in $\erre^n$ of the set $\{S_k>0\}$ containing $(1,\ldots,1)$. Then $\Sigma_1$ and $\Sigma_2$ coincide in a neighbourhood of $p$. 
\end{theorem}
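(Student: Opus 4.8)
The plan is to turn the two geometric hypotheses into a single elliptic differential inequality for the ``vertical gap'' between the hypersurfaces and then to invoke the Hopf strong maximum principle, exactly as in the classical $k=1$ comparison principle. First I would realize both $\Sigma_1$ and $\Sigma_2$ as normal graphs over the common tangent space $V:=T_p\Sigma_1=T_p\Sigma_2$, through the maps $\varphi_j(x)=\exp_p\big(x+\mu_j(x)\eta_0\big)$ supplied by the definition, after shrinking to a common domain $W\ni 0$. Tangency at $p$ gives $\mu_j(0)=0$ and $D\mu_j(0)=0$, while the assumption that $\Sigma_1$ lies above $\Sigma_2$ reads precisely $w:=\mu_1-\mu_2\ge 0$ on $W$, with $w(0)=0$ and $Dw(0)=0$. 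For a normal graph $x\mapsto\exp_p\big(x+\mu(x)\eta_0\big)$ the $k$-mean curvature at $\varphi(x)$ is a quasilinear second order quantity $\mathcal{Q}_k[\mu](x)=\Phi_k\big(x,\mu,D\mu,D^2\mu\big)$, where $\Phi_k$ is smooth, determined by the ambient metric near $p$, on the open set of arguments whose associated principal curvature vector lies in the G{\aa}rding cone $\Gamma_k$; moreover $\partial_{r_{ij}}\Phi_k$, the derivative of $\Phi_k$ in its Hessian variables, equals, up to a positive factor coming from the induced metric and from the normalization $H_k=\binom{n}{k}^{-1}S_k$, the Newton transformation $P_{k-1}$ of the corresponding shape operator, hence it is positive definite exactly on that set. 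Since $H_k^{\Sigma_j}=\mathcal{Q}_k[\mu_j]$, with the sign conventions of the statement the hypothesis $H_k^{\Sigma_2}\ge H_k^{\Sigma_1}$ near $0$ becomes $\mathcal{Q}_k[\mu_1]-\mathcal{Q}_k[\mu_2]\le 0$ near $0$.

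Next I would interpolate. Writing $\mu_t:=t\mu_1+(1-t)\mu_2$ and applying the fundamental theorem of calculus,
\[
\mathcal{Q}_k[\mu_1]-\mathcal{Q}_k[\mu_2]=a^{ij}(x)D_{ij}w+b^i(x)D_iw+c(x)w ,
\]
with $a^{ij}(x)=\int_0^1\partial_{r_{ij}}\Phi_k\big(x,\mu_t,D\mu_t,D^2\mu_t\big)\,dt$ and analogous integral formulas for $b^i$ and $c$. The crux is uniform ellipticity of $(a^{ij})$ near $0$. At $x=0$ one has $\mu_t(0)=0$, $D\mu_t(0)=0$ and $D^2\mu_t(0)=tD^2\mu_1(0)+(1-t)D^2\mu_2(0)\ge D^2\mu_2(0)$ in the Loewner order, since $D^2w(0)\ge 0$. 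In the normal coordinates at $p$ the shape operator of $\Sigma_j$ at $p$ is a fixed multiple of $D^2\mu_j(0)$, so Weyl's monotonicity theorem shows that the ordered eigenvalue vector of the interpolated shape operator $A_t(0)$ dominates, entry by entry, that of $A_2(0)=\lambda^{\Sigma_2}(0)\in\Gamma_k$; since $\Gamma_k+\overline{\erre^n_{\ge 0}}\subseteq\Gamma_k$, this yields $\lambda\big(A_t(0)\big)\in\Gamma_k$ for every $t\in[0,1]$, hence $P_{k-1}\big(A_t(0)\big)>0$ (cf.\ \cite{Ga}) and $a^{ij}(0)>0$. (When $k=1$ the operator $\mathcal{Q}_1$ is elliptic regardless, which is exactly why the cone condition is imposed only for $k\ge 2$.) By continuity of all data in $x$, $(a^{ij}(x))$ stays positive definite, hence uniformly elliptic with bounded lower order coefficients, on a smaller ball $B\subseteq W$ about $0$.

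Finally, on $B$ the function $w\ge 0$ satisfies $a^{ij}D_{ij}w+b^iD_iw+cw\le 0$; replacing $c$ by $-c^-\le 0$, which is permitted because $w\ge 0$, produces a supersolution of a uniformly elliptic operator with nonpositive zeroth order term attaining its minimum value $0$ at the interior point $0$. The Hopf strong maximum principle (\cite[Theorem~3.5]{GT}) then forces $w\equiv 0$ on $B$, i.e.\ $\mu_1=\mu_2$ near $0$, which is the asserted coincidence of $\Sigma_1$ and $\Sigma_2$ in a neighbourhood of $p$. I expect the ellipticity step to be the main obstacle: it requires computing the symbol of the $k$-mean curvature operator for a normal graph in a general Riemannian ambient and identifying it with $P_{k-1}$, and then coupling the convexity and the monotonicity of $\Gamma_k$ with the Loewner inequality $D^2\mu_1(0)\ge D^2\mu_2(0)$ furnished by the hypothesis that $\Sigma_1$ lies above $\Sigma_2$.
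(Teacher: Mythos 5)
The paper does not prove this statement: it is imported from Fontenele--Silva \cite{fontenelesilva} and used as a black box, so there is no internal proof to compare against. Your sketch nevertheless reconstructs the argument of that reference accurately, and it is the standard route for an Alexandrov-type tangency principle: represent both hypersurfaces as normal graphs $\varphi_j(x)=\exp_p(x+\mu_j(x)\eta_0)$, reduce to the nonnegative difference $w=\mu_1-\mu_2$ with $w(0)=0$, $Dw(0)=0$, interpolate the curvature operator $\mathcal Q_k$ along $\mu_t=t\mu_1+(1-t)\mu_2$ via the fundamental theorem of calculus to obtain a linear second-order inequality $a^{ij}D_{ij}w+b^iD_iw+cw\le 0$, and then invoke the Hopf strong maximum principle after discarding the positive part of $c$. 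The point you isolate as delicate — positivity of $(a^{ij})$ near $0$ — is indeed the crux, and the mechanism you propose is correct: the symbol of $\mathcal Q_k$ in the Hessian slot is $P_{k-1}$ of the associated shape operator; $D^2w(0)\ge 0$ gives $A_t(0)\ge A_2(0)$ in the Loewner order; Weyl's inequality then forces the ordered eigenvalue vector of $A_t(0)$ to dominate $\lambda^{\Sigma_2}(0)\in\Gamma_k$; and monotonicity of the G{\aa}rding cone ($\Gamma_k+\overline{\erre^n_{\ge 0}}\subseteq\Gamma_k$) keeps $\lambda(A_t(0))$ in $\Gamma_k$, whence $P_{k-1}(A_t(0))>0$ and $a^{ij}(0)>0$; continuity in $x$ extends this to a small ball.

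Two small remarks to tighten. First, for $k\ge 2$ the operator $\mathcal Q_k$ is fully nonlinear, not quasilinear as written — this is harmless for the interpolation step but worth stating correctly. Second, the implication $D^2w(0)\ge 0\Rightarrow A_1(0)\ge A_2(0)$ depends on the orientation convention: it holds precisely when the normal used to compute $H_k^{\Sigma_j}$ agrees with $\eta_0$ at $p$, so that in normal coordinates $A_j(0)$ is $+D^2\mu_j(0)$ rather than its negative. With the opposite orientation the Loewner comparison reverses, so this sign needs to be pinned down explicitly for the Weyl-monotonicity step to land in $\Gamma_k$ rather than $-\Gamma_k$.
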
  

Note that if there exists an elliptic point of $\Sigma_2$ and $H_k^{\Sigma_2}$ is constant, the assumption $\lambda^{\Sigma_2}(0)\in\Gamma_k$ is 
automatically satisfied. 

Theorem \ref{tanprin} enable us to extend Proposition 3.7 in \cite{aliasdajczer} to the case of hypersurfaces with constant $k$-mean curvature, for any $1\leq k\leq n$.

\begin{prop}\label{proptanprin}
Let $f:\Sigma\ra\erre\times_\rho\p^n$ be a compact constant $k$-mean curvature hypersurface, $1\leq k\leq n$, with boundary 
$f(\partial\Sigma)\subset\p_\tau$ for some $\tau\in\erre$.  
Then the following holds:
\begin{itemize}
\item[$(i)$] If $H_k\leq\inf_{[\tau,+\infty)}\mathcal{H}_k$ and $\mathcal{H}(\tau)>0$, $\mathcal{H}'\geq0$ on $[\tau,+\infty)$ for $k\geq2$, then $h\leq \tau$;
\item[$(ii)$] If $H_k\geq \sup_{(-\infty,\tau]}\mathcal{H}_k$ and either $H_2>0$ or there exists an elliptic point of $\Sigma$ when $k\geq3$ then $h\geq\tau$.
\end{itemize}
\end{prop}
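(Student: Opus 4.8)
The plan is to prove both parts by contradiction, comparing $f(\Sigma)$ with a slice through an interior extremum of the height function $h$ by means of the tangency principle, Theorem~\ref{tanprin}. Recall that $\norm{\nabla h}^2=1-\Theta^2$, so at a critical point of $h$ the unit normal $N$ is parallel to $T=-\partial/\partial t$ and $\Theta=\pm1$ there.

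For $(i)$, suppose $\sup_\Sigma h>\tau$. Since $\Sigma$ is compact and $h\equiv\tau$ on $\partial\Sigma$, the value $t^\ast:=\max_\Sigma h$ is attained at an interior point $p_0$, and $t^\ast>\tau$. The slice $\p_{t^\ast}$ is tangent to $f(\Sigma)$ at $f(p_0)$, and since $h\le t^\ast$ on $\Sigma$ the image $f(\Sigma)$ lies, near $f(p_0)$, in $\{t\le t^\ast\}$; taking $\eta_0=T$ (a unit normal to $f(\Sigma)$ at $f(p_0)$), one checks directly from the definition that $f(\Sigma)$ remains above $\p_{t^\ast}$ with respect to $\eta_0$, so that in Theorem~\ref{tanprin} we take $\Sigma_1=f(\Sigma)$, $\Sigma_2=\p_{t^\ast}$. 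The G{\.a}rding-cone hypothesis (needed when $k\ge2$) then bears on $\lambda^{\Sigma_2}(0)=\mathcal{H}(t^\ast)(1,\dots,1)$, which lies in $\Gamma_k$ because $\mathcal{H}(\tau)>0$ and $\mathcal{H}'\ge0$ on $[\tau,+\infty)$ force $\mathcal{H}(t^\ast)>0$. For the curvature inequality $H_k^{\Sigma_2}\ge H_k^{\Sigma_1}$ (both $k$-mean curvatures computed with the normals extending $\eta_0$) one has $H_k^{\Sigma_2}=\mathcal{H}_k(t^\ast)$ and $H_k^{\Sigma_1}=\Theta(p_0)^kH_k$, while the hypothesis gives $H_k\le\inf_{[\tau,+\infty)}\mathcal{H}_k\le\mathcal{H}_k(t^\ast)$; this is exactly what is needed when $\Theta(p_0)^k=1$, and in the remaining case one inserts $\nabla h(p_0)=0$ into \eqref{hessh} to get $\mathcal{H}(t^\ast)\,\id+\Theta(p_0)A\le0$ and combines this with \eqref{garding} and the structure of the cone $\Gamma_k$ either to recover the comparison or to reach a contradiction outright. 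Theorem~\ref{tanprin} then yields that $f(\Sigma)$ coincides with $\p_{t^\ast}$ in a neighbourhood of $p_0$.

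To finish $(i)$, observe that $\{p\in\Sigma:h(p)=t^\ast\}$ is nonempty, closed, and — by the previous step applied at each of its points — open, hence all of $\Sigma$; thus $f(\Sigma)\subset\p_{t^\ast}$, contradicting $f(\partial\Sigma)\subset\p_\tau$ since $\tau<t^\ast$. Part $(ii)$ is handled the same way, at an interior minimum $p_0$ with $t_\ast:=\min_\Sigma h<\tau$, comparing $f(\Sigma)$ with the slice $\p_{t_\ast}$, which now lies below it; the difference is in the orientation. To place the G{\.a}rding-cone condition on $\lambda^{\Sigma}(p_0)$ rather than on the slice one takes $\Sigma_1=\p_{t_\ast}$, $\Sigma_2=f(\Sigma)$ with $\eta_0=T$, and it is precisely here that the assumption ``$H_2>0$, or an elliptic point when $k\ge3$'' enters: via \eqref{garding} and \cite[Proposition~3.2]{barbosacolares} it makes $P_1,\dots,P_{k-1}$ positive definite and $H_k>0$, so $S_1,\dots,S_k>0$ and $\lambda^{\Sigma}\in\Gamma_k$ on all of $\Sigma$. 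The comparison $\Theta(p_0)^kH_k\ge\mathcal{H}_k(t_\ast)$ then follows from $H_k\ge\sup_{(-\infty,\tau]}\mathcal{H}_k\ge\mathcal{H}_k(t_\ast)$, after disposing of the unfavourable sign of $\Theta(p_0)$ via \eqref{hessh} at the minimum, and one concludes by the same connectedness argument.

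The step I expect to be most delicate is the sign bookkeeping: choosing $\eta_0$ so that simultaneously the ``remains above'' hypothesis of Theorem~\ref{tanprin} holds and the G{\.a}rding-cone hypothesis lands on the hypersurface whose principal curvatures are under control (the slice in $(i)$, the immersed hypersurface in $(ii)$), and then tracking the sign $\Theta(p_0)^k$ that $H_k$ picks up under the induced choice of unit normal. Closely tied to this is the case in which $\Theta$ takes the ``wrong'' value at the extremum, where one must combine \eqref{hessh} with G{\.a}rding's inequalities \eqref{garding} either to salvage $H_k^{\Sigma_2}\ge H_k^{\Sigma_1}$ or to produce a direct contradiction.
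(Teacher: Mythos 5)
Your proposal follows essentially the same approach as the paper's proof: at an interior extremum of the height function you compare $f(\Sigma)$ with the tangent slice through that extremum via the Fontenele–Silva tangency principle, using the very same assignments $\Sigma_1=\Sigma$, $\Sigma_2=\p_{t^\ast}$ for $(i)$ and $\Sigma_1=\p_{t_\ast}$, $\Sigma_2=\Sigma$ for $(ii)$, and you conclude by the same connectedness argument. The only notable elaboration is your explicit tracking of the sign $\Theta(p_0)^k$ (the paper silently takes the favourable orientation, and in fact writes $\eta_0=-T$ where your $\eta_0=T$ is the consistent choice); your sketch of how to dispose of the unfavourable sign via \eqref{hessh} and G{\.a}rding's inequalities is in the right spirit, so this is a more careful version of the same argument rather than a different route.
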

\begin{proof}
Let us prove $(i)$ first. Assume that $H_k\leq\inf_{[\tau,+\infty)}\mathcal{H}_k$, but that $h\leq\tau$ is false. Then, by compactness of $\Sigma$, there exists a point $p_0\in\Sigma$ where $h$ 
attains its maximum, that is
\[\max_\Sigma h=h(p_0)=\tau_0\]
and
\[\tau<\tau_0.\] 
We consider $\Sigma_1=\Sigma$ and $\Sigma_2=\p_{\tau_0}$. Note that $\Sigma_1\neq\Sigma_2$, 
$\Sigma_1$ and $\Sigma_2$ are tangent at the common point $p_0$ and $\Sigma_1$ remains above 
$\Sigma_2$ with respect to the normal $\eta_0=-T$ at $p_0$. Moreover,
\[
H_k^{\Sigma_1}=H_k\leq\inf_{[\tau,+\infty)}\mathcal{H}_k\leq(\mathcal{H}(\tau_0))^k=H_k^{\Sigma_2}.
\]
Furthermore, at $\tau_0$, because of the assumptions on $\mathcal{H}$, $\mathcal{H}(\tau_0)>0$, so that $p_0$ is an elliptic point for $\Sigma_2$. 
We thus conclude using Theorem \ref{tanprin} that $\Sigma_1$ and $\Sigma_2$ must coincide in a neighborhood of $p_0$. Using the fact that $\Sigma$ is connected we easily contradict $\Sigma_1\neq\Sigma_2$.

In case $(ii)$, we reason again by contradiction and assume that $h\geq\tau$ is false. Hence, again by compactness of $\Sigma$, there exists a point $p_1\in\Sigma$ where $h$ 
attains its minimum, that is
\[\min_\Sigma h=h(p_1)=\tau_1.\]
Hence 
\[\tau>\tau_1\] 
and we choose $\Sigma_1=\p_{\tau_1}$ and $\Sigma_2=\Sigma$. Again  $\Sigma_1\neq\Sigma_2$, $\Sigma_1$ and $\Sigma_2$ are tangent at the common point $p_0$ and $\Sigma_1$ remains above $\Sigma_2$ with respect to the normal $\eta_0=-T$. Furthermore 
\[
H_k^{\Sigma_2}=H_k\geq\sup_{(-\infty,\tau]}\mathcal{H}_k\geq\mathcal{H}_k(\tau_1)=H_k^{\Sigma_1}.
\]
Because of the assumptions on $\Sigma_2=\Sigma$ we conclude again by applying the tangency principle. 
\end{proof}

With a different approach and slightly different assumptions we now prove the following
\begin{prop}\label{propheight}
Let $f:\Sigma\ra\erre\times_\rho\p^n$ be a compact constant $k$-mean curvature hypersurface, $1\leq k\leq n$, with boundary 
$f(\partial\Sigma)\subset\p_\tau$ for some $\tau\in\erre$, whose angle function $\Theta$ does not change sign. 
Assume that $\mathcal{H}'\geq0$ and that $\mathcal{H}(h)\neq0$. Then either $h\leq \tau$ or $h\geq\tau$.
\end{prop}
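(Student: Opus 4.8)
The plan is to reason by contradiction, running the tangency--principle argument of Proposition~\ref{proptanprin}, but now \emph{producing} along the way the curvature comparison that was there assumed. Suppose $h$ is neither $\le\tau$ nor $\ge\tau$. Then $h$ is non-constant (a constant $h$ would equal the boundary value $\tau$, and both alternatives would hold), so by compactness its maximum $\tau_0=h(p_0)$ and minimum $\tau_1=h(p_1)$ are attained, with $\tau_1<\tau<\tau_0$; moreover $p_0,p_1$ are interior points, since $\partial\Sigma$, if nonempty, lies at height $\tau$. Since $\mathcal H'\ge0$, $\mathcal H$ is non-decreasing; being moreover nowhere zero on $h(\Sigma)=[\tau_1,\tau_0]$, it has constant sign there. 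Composing $f$ with the reflection $t\mapsto-t$ of $\erre\times_\rho\p^n$ (equivalently, replacing $\rho(t)$ by $\rho(-t)$), which leaves the hypotheses and the statement unchanged, we may assume $\mathcal H(h)>0$ on $\Sigma$.

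First I would produce an elliptic point at $p_0$. There $\nabla h=0$, so from $\norm{\nabla h}^2=1-\Theta^2$ we get $\Theta(p_0)=\pm1$, while $\hess h(p_0)\le0$. Substituting $\nabla h(p_0)=0$ in \eqref{hessh} gives $\hess h(p_0)=\mathcal H(\tau_0)I+\Theta(p_0)A(p_0)\le0$, and since $\mathcal H(\tau_0)>0$ and $\Theta(p_0)=\pm1$ the operator $A(p_0)$ is definite. Fixing the orientation of $\Sigma$ so that $A(p_0)$ is positive definite --- which does not disturb the hypothesis that $\Theta$ has constant sign --- turns $p_0$ into an elliptic point, yields $A(p_0)\ge\mathcal H(\tau_0)I$, and pins down the sign of $\Theta(p_0)$; since $\Theta$ does not change sign and $\Theta(p_1)=\pm1$, the value of $\Theta(p_1)$ is thereby determined too. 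Because $H_k$ is constant, the elliptic point $p_0$ forces $H_k>0$ (whence $\Sigma$ is two-sided), the inequalities \eqref{garding}, and --- by the remark following Theorem~\ref{tanprin} --- the condition $\lambda^\Sigma\in\Gamma_k$ at every point of $\Sigma$.

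Next I would read off the comparison. From $A(p_0)\ge\mathcal H(\tau_0)I$ all principal curvatures at $p_0$ are $\ge\mathcal H(\tau_0)>0$, so $S_k(p_0)\ge\binom nk\mathcal H(\tau_0)^k$, i.e. $H_k=H_k(p_0)\ge\mathcal H_k(\tau_0)$; and since $\mathcal H$ is positive and non-decreasing on $[\tau_1,\tau_0]$, so is $\mathcal H_k=\mathcal H^k$, giving $H_k\ge\mathcal H_k(\tau_0)\ge\mathcal H_k(\tau_1)$. Now I would apply the tangency principle at $p_1$ exactly as in the proof of Proposition~\ref{proptanprin}(ii): $\Sigma$ and $\p_{\tau_1}$ are tangent at the interior point $p_1$ with $\Sigma$ lying on the side $\{t\ge\tau_1\}$, the $k$-mean curvature comparison just obtained holds (and here the already-determined value of $\Theta(p_1)$ is what makes the comparison come out with the correct sign when $k$ is odd), and $\lambda^\Sigma(p_1)\in\Gamma_k$; hence Theorem~\ref{tanprin} forces $\Sigma$ to coincide with $\p_{\tau_1}$ in a neighbourhood of $p_1$. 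Since the same holds at every point of $\set{h=\tau_1}$ --- all interior minima, where the same comparison and cone condition apply --- that set is open and closed in $\Sigma$, so $h\equiv\tau_1$ by connectedness, contradicting $h(p_0)=\tau_0>\tau_1$. This proves the proposition.

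The step I expect to be the main obstacle is the sign and orientation bookkeeping: one must ensure that the unit normal fixed at $p_0$, the resulting sign of $\Theta$ on all of $\Sigma$, and the orientation implicitly used by the tangency principle at $p_1$ are mutually consistent, so that $H_k^\Sigma\ge H_k^{\p_{\tau_1}}$ holds with the correct signs in the odd-$k$ case --- and this is precisely where the hypothesis that $\Theta$ does not change sign is needed, since it transports the orientation information from the maximum of $h$ to its minimum. One should also check that the reflection reducing to the case $\mathcal H(h)>0$ leaves intact each hypothesis (constancy of $H_k$, $\mathcal H'\ge0$, $\Theta$ of constant sign, the boundary lying in a slice).
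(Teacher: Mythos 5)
Your proof is correct and takes a genuinely different route from the paper's. The authors prove Proposition~\ref{propheight} by a pure maximum--principle argument: they introduce the weighted Newton operator $\mathcal L_{k-1}=\mathrm{Tr}\bigl(\bigl[\sum_{j=0}^{k-1}(-1)^j\tfrac{c_{k-1}}{c_j}\mathcal H(h)^{k-1-j}\Theta^{j}P_j\bigr]\circ\hess\bigr)$, use the identity $\mathcal L_{k-1}\sigma(h)=c_{k-1}\rho(h)(\mathcal H(h)^k+(-1)^{k-1}\Theta^k H_k)$ (proved by induction in the cited reference), evaluate at the interior max and min to force $\mathcal H(\tau_1)\ge H_k^{1/k}\ge\mathcal H(\tau_0)$, conclude with $\mathcal H'\ge0$ that $\mathcal H(h)$ is constant, and finally apply the elliptic maximum principle to $L_{k-1}\sigma(h)\ge0$. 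Your proof bypasses the $\mathcal L_{k-1}$ machinery entirely: the elementary Hessian computation at the interior maximum $p_0$ (with $\Theta(p_0)=-1$) already gives $A(p_0)\ge\mathcal H(\tau_0)I$, hence $H_k\ge\mathcal H_k(\tau_0)$ --- this is in fact the same inequality the paper obtains at $p_0$ from $\mathcal L_{k-1}\sigma(h)(p_0)\le0$, but derived more cheaply --- and then $\mathcal H'\ge0$ gives $H_k\ge\mathcal H_k(\tau_1)$, so the tangency principle (Theorem~\ref{tanprin}, used exactly as in Proposition~\ref{proptanprin}(ii)) at the interior minimum $p_1$ yields the contradiction. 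This is arguably the more geometric and economical argument, since it relies only on the Hessian formula~\eqref{hessh} and Theorem~\ref{tanprin}, both already in play, and avoids the semi--ellipticity discussion and the second maximum--principle step; the cost is the orientation/sign bookkeeping that you flag, which is genuine (one must check that the orientation fixed at $p_0$, propagated to $p_1$ via the constant sign of $\Theta$, makes $H_k^{\Sigma}\ge H_k^{\p_{\tau_1}}$ and $\lambda^{\Sigma}(p_1)\in\Gamma_k$ hold in the normal convention of Fontenele--Silva), but you track it correctly, and the paper's own Proposition~\ref{proptanprin} has to navigate the same conventions. One small observation: for $k$ odd the elliptic point alone already gives $H_k>0$, which with the right orientation dominates $\pm\mathcal H_k(\tau_1)$ regardless; the comparison $H_k\ge\mathcal H_k(\tau_0)\ge\mathcal H_k(\tau_1)$ is really only needed for $k$ even.
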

\begin{proof}
Assume by contradiction that there exist points $p_0$, $p_1$ of $\Sigma$ satisfying
\[
\tau_0=h(p_0)=\max_\Sigma h>\tau>\min_\Sigma h=h(p_1)=\tau_1.
\]
In case $\mathcal{H}(h)>0$ let us choose the orientation so that $\Theta\leq0$.
Then, since $p_0$ is a point of maximum for $h$ in the interior of $\Sigma$, using \eqref{hessh} for each $v\in T_{p_0}\Sigma$ we have
\begin{equation}\label{hessellipt}
0\geq \Hess h(p_0)(v,v)=\mathcal{H}(\tau_0)\norm{v}^2+\Theta(p_0)\pair{Av,v}(p_0)>-\pair{Av,v}(p_0).
\end{equation}
Hence $p_0$ is an elliptic point and, as already observed in the introduction, each $L_j$ is elliptic for any $1\leq j\leq k-1$, $H_k$ is positive and
\[
H_1\geq H_2^{1/2}\geq\cdots\geq H_{k-1}^{1/(k-1)}\geq H_k^{1/k}>0.
\]
For $1\leq k\leq n$, we consider the operator
\[
\mathcal{L}_{k-1}=\mathrm{Tr}
\Big(\Big[\sum_{j=0}^{k-1}(-1)^j\frac{c_{k-1}}{c_j}\mathcal{H}(h)^{k-1-j}\Theta^{j}P_j\Big]\circ \hess \Big)=
\mathrm{Tr}(\mathcal{P}_{k-1}\circ\hess),
\]
where
\[
\mathcal{P}_{k-1}=\sum_{j=0}^{k-1}(-1)^j\frac{c_{k-1}}{c_j}\mathcal{H}(h)^{k-1-j}\Theta^{j}P_j.
\]
Using induction on $k$ it is not difficult to prove (see \cite[Section 6]{aliasimperarigoli}) that
$$
\mathcal{L}_{k-1}\sigma(h)=c_{k-1}\rho(h)(\mathcal{H}(h)^{k}+(-1)^{k-1}\Theta^{k}H_{k}).
$$
Moreover, the previous observations imply that
$\mathcal{L}_{k-1}$ is a semi-elliptic operator and, since $\Theta(p_0)=-1=\Theta(p_1)$, we have
\[
\mathcal{L}_{k-1}\sigma(h)(p_{0})=c_{k-1}\rho(\tau_0)(\mathcal{H}(\tau_0)^k-H_k)\leq 0
\]
and
\[
\mathcal{L}_{k-1}\sigma(h)(p_{1})=c_{k-1}\rho(\tau_1)(\mathcal{H}(\tau_1)^k-H_k)\geq 0.
\]
Thus, since $\mathcal{H}(h)> 0$ on $\Sigma$,
$$\mathcal{H}(\tau_1)\geq H_k^{1/k}\geq \mathcal{H}(\tau_0).$$ 
Immediately, the assumption on $\mathcal{H}'(h)$ implies that
$\mathcal{H}(h)=H_k^{1/k}$ is constant on $\Sigma$. Therefore, by the G{\.a}rding inequality
$H_{k-1}\geq H_k^{(k-1)/k}$ and the fact that $\Theta\geq -1$, using 	\eqref{lrsigma} we obtain
\begin{eqnarray*}
L_{k-1} \sigma(h) & = & c_{k-1}\rho(h)H_k^{1/k}(H_{k-1}+\Theta H^{(k-1)/k}_k)\\
{} & \geq & c_{k-1}\rho(h)H_k^{1/k}(H_{k-1}-H^{(k-1)/k}_k)\geq 0.
\end{eqnarray*}
That is, $L_{k-1}\sigma(h)\geq 0$ on the compact manifold $\Sigma$. Therefore, by the maximum principle applied to the
elliptic operator $L_{k-1}$ we conclude that $\sigma(h)$, and hence $h$, must attain its maximum on $\partial\Sigma$;  contradiction.

Finally, when $\mathcal{H}(h)<0$, we can choose the orientation so that $\Theta\geq 0$ and consider then the operator 
\[
\mathcal{\hat{L}}_{k-1}=\mathrm{Tr}
\Big(\Big[\sum_{j=0}^{k-1}(-1)^{k-1-j}\frac{c_{k-1}}{c_j}\mathcal{H}(h)^{k-1-j}\Theta^{j}P_j\Big]\circ \hess \Big)=
\mathrm{Tr}(\mathcal{\hat{P}}_{k-1}\circ\hess),
\]
where
\[
\mathcal{\hat{P}}_{k-1}=\sum_{j=0}^{k-1}(-1)^{k-1-j}\frac{c_{k-1}}{c_j}\mathcal{H}(h)^{k-1-j}\Theta^{j}P_j.
\]
Then, similarly to what we did above, since $p_1$ is a point of minimum for $h$ in the interior of $\Sigma$, it holds
\[
0\leq \Hess h(p_1)(v,v)=\mathcal{H}(\tau_1)\norm{v}^2+\Theta(p_1)\pair{Av,v}(p_1)<\pair{Av,v}(p_1).
\] 
Hence $p_1$ is an elliptic point and $\mathcal{\hat{L}}_{k-1}$ is semi-elliptic. Furthermore, it is not difficult to prove using induction on $k$ that
$$
\mathcal{\hat{L}}_{k-1}\sigma(h)=c_{k-1}\rho(h)((-1)^{k-1}\mathcal{H}(h)^{k}+\Theta^{k}H_{k}).
$$
The proof then follows as in the case $\Theta\leq 0$.
\end{proof}

In the sequel, we will focus on hypersurfaces of constant $k$-mean curvature in pseudo-hyperbolic ambient spaces. 
Following the terminology 
introduced by Tashiro in \cite{tashiro}, we say that a manifold is \textit{pseudo-hyperbolic} if it is a warped product $\erre\times_\rho\p^n$ where the warping
function is a solution for some $c<0$ of the ordinary differential equation
\begin{equation}\label{oderho}
\rho''+c\rho=0.
\end{equation}
Thus either $\rho(t)=\cosh(\sqrt{-c}t)$ or $\rho(t)=\mathrm{e}^{\sqrt{-c}t}$. The terminology is due to the fact that with a suitable choice of the fiber, we obtain a representation of the hyperbolic space. Indeed, for more details we refer, for instance, to Montiel \cite{montiel}, the hyperbolic space $\acca^{n+1}$ can be viewed as a hypersphere in the Lorentz-Minkowski space, precisely, as a connected component of the hyperquadric
\[
\{p\in\erre^{n+2}_1|\pair{p,p}=-1\}.
\]
If we fix $a\in\erre^{n+2}_1$ and consider the closed (see \cite{montiel} for this notion) conformal vector field
\[
\mathcal{T}(p)=a+\pair{a,p}p,\qquad p\in\acca^{n+1},
\]
depending on the causal character of $a$ we have different foliations of $\acca^{n+1}$ and hence different descriptions of it as a warped product space. Namely, if $a$ is timelike, $\acca^{n+1}$ is foliated by spheres and can be described as the warped product 
$\erre_+\times_{\sinh t}\esse^n$; if $a$ is lightlike the space is foliated by horospheres and it can be viewed as 
$\erre\times_{\mathrm{e}^t}\erre^n$; finally, if $a$ is spacelike, the vector field $\mathcal{T}$ generates a foliation of 
$\acca^{n+1}$ by means of totally geodesic hyperbolic hyperplanes and it can be represented as the warped product
$\erre\times_{\cosh t}\acca^n$.

In the rest of the paper we will restrict ourselves to the case $c=-1$ in \eqref{oderho}. In case $\rho(t)=\mathrm{e}^t$ the following consequence of Proposition \ref{proptanprin} is straightforward. 
\begin{corollary}\label{coroet}
Let $f:\Sigma\ra\erre\times_{\mathrm{e}^t}\p^n$ be a compact constant $k$-mean curvature hypersurface, $1\leq k\leq n$, with boundary 
$f(\partial\Sigma)\subset\p_\tau$
for some $\tau\in\erre$. Then
\begin{itemize}
\item[$(i)$] if  $H_k\leq1$ then $h\leq\tau$; 
\item[$(ii)$] if  $H_k\geq1$ and there exists an elliptic point when $k\geq3$, then $h\geq\tau$.
\end{itemize}
In particular, $H_k=1$ if and only 
if $h=\tau$.
\end{corollary}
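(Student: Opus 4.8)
The plan is to read Corollary \ref{coroet} as the specialization of Proposition \ref{proptanprin} to the warping function $\rho(t)=\mathrm{e}^t$, so essentially no new argument is needed. I would first record the elementary computation $\mathcal{H}(t)=\rho'(t)/\rho(t)\equiv 1$ for $\rho(t)=\mathrm{e}^t$, which gives $\mathcal{H}_k(t)\equiv 1$ for every $t\in\erre$ and $\mathcal{H}'\equiv 0$ on all of $\erre$. In particular $\mathcal{H}(\tau)=1>0$, the monotonicity hypothesis $\mathcal{H}'\geq 0$ is trivially true, and
\[
\inf_{[\tau,+\infty)}\mathcal{H}_k=\sup_{(-\infty,\tau]}\mathcal{H}_k=1 .
\]

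For part $(i)$ I would then observe that the hypothesis $H_k\leq 1$ is exactly $H_k\leq\inf_{[\tau,+\infty)}\mathcal{H}_k$, while $\mathcal{H}(\tau)>0$ and $\mathcal{H}'\geq 0$ on $[\tau,+\infty)$, so Proposition \ref{proptanprin}$(i)$ applies without modification and yields $h\leq\tau$. For part $(ii)$, the hypothesis $H_k\geq 1$ is exactly $H_k\geq\sup_{(-\infty,\tau]}\mathcal{H}_k$; here I would split according to $k$: when $k=1$ the tangency principle (Theorem \ref{tanprin}) requires no cone condition and Proposition \ref{proptanprin}$(ii)$ applies directly; when $k=2$ the inequality $H_2\geq 1>0$ supplies the needed condition $H_2>0$; and when $k\geq 3$ the existence of an elliptic point is assumed in the statement. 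In all cases Proposition \ref{proptanprin}$(ii)$ gives $h\geq\tau$.

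For the final ``in particular'', I would combine the two parts: if $H_k=1$ then both $H_k\leq 1$ and $H_k\geq 1$ hold, so $(i)$ and $(ii)$ together (using, when $k\geq 3$, the elliptic-point assumption already present) force $h\equiv\tau$. Conversely, if $h\equiv\tau$ then $f(\Sigma)\subset\p_\tau$, and since $f$ is then a local diffeomorphism onto the totally umbilical slice $\p_\tau$, whose constant $k$-mean curvature is $\mathcal{H}_k(\tau)=1$, one concludes $H_k=1$.

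I do not expect a genuine obstacle: the whole content is bookkeeping. The only point that needs care is the verification that every structural hypothesis of Proposition \ref{proptanprin} — positivity $\mathcal{H}(\tau)>0$, monotonicity $\mathcal{H}'\geq 0$, and the sign/ellipticity conditions entering case $(ii)$ — becomes either vacuous or automatic once $\rho(t)=\mathrm{e}^t$ is inserted, together with the observation that for $k=1,2$ the $\Gamma_k$-type hypothesis needed by the tangency principle is free (respectively trivial, or implied by $H_2>0$), so that only the $k\geq 3$ case genuinely carries the elliptic-point assumption.
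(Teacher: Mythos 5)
Your proof is correct and follows exactly the route the paper intends: Corollary \ref{coroet} is stated there as a ``straightforward'' specialization of Proposition \ref{proptanprin} to $\rho(t)=\mathrm{e}^t$, and you carry out precisely that specialization, correctly noting that $\mathcal{H}\equiv 1$, $\mathcal{H}'\equiv 0$, and $\mathcal{H}_k\equiv 1$ so the monotonicity and positivity hypotheses become vacuous, and that for $k=2$ the requirement $H_2>0$ is supplied by $H_2\geq 1$ while for $k\geq 3$ the elliptic-point hypothesis does the work. Your treatment of the ``in particular'' clause, combining parts $(i)$ and $(ii)$ for one direction and reading off $\mathcal{H}_k(\tau)=1$ from the totally umbilical slice for the converse, is also the intended argument.
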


In case $\rho(t)=\cosh t$, arguing as in the proof of Proposition \ref{proptanprin}, we prove the following
\begin{corollary}\label{corocosht}
Let $f:\Sigma\ra\erre\times_{\cosh t}\p^n$ be a compact constant $k$-mean curvature hypersurface, $1\leq k\leq n$, with boundary 
$f(\partial\Sigma)\subset\p_0$ and such that the angle function $\Theta$ does not change sign.
\begin{enumerate}
\item[$(i)$] If $H_k\leq0$, then $h\leq 0$.\\
\item[$(ii)$] If either $H_k\geq0$ when $k$ is odd or $H_k\geq1$ when $k$ is even, then $h\geq 0$.
\end{enumerate}
\end{corollary}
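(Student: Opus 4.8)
The idea is to mimic the proof of Proposition~\ref{proptanprin}, using the explicit warping function $\rho(t)=\cosh t$, for which $\mathcal H(t)=\rho'(t)/\rho(t)=\tanh t$ and $\mathcal H_k(t)=\tanh^k t$. The crucial sign observations are that $\mathcal H(t)=\tanh t$ has the sign of $t$, that $\mathcal H'(t)=\operatorname{sech}^2 t>0$ everywhere, and that $\mathcal H(0)=0$, so that $\p_0$ is totally geodesic; also $\inf_{[0,+\infty)}\mathcal H_k=0$ when $k$ is odd, whereas $\inf_{[0,+\infty)}\mathcal H_k=0$ when $k$ is even as well, but $\sup_{(-\infty,0]}\mathcal H_k=0$ for $k$ odd and $\sup_{(-\infty,0]}\mathcal H_k=1$ for $k$ even (the latter because $\tanh^k t\to1$ as $t\to-\infty$ when $k$ is even, while $\tanh^0=1$ trivially and $\tanh^k t\le 1$). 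These asymmetries between the two ends are exactly what forces the hypothesis $H_k\ge 1$ when $k$ is even in part~$(ii)$.

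\textbf{Part $(i)$.} Suppose $H_k\le 0$ but $h\le 0$ fails; by compactness $h$ attains its maximum $\tau_0>0$ at an interior point $p_0$. Since $\tau_0>0$ we have $\mathcal H(\tau_0)=\tanh\tau_0>0$, so the slice $\Sigma_2=\p_{\tau_0}$ is an elliptic hypersurface with $H_k^{\Sigma_2}=\tanh^k\tau_0>0\ge H_k=H_k^{\Sigma_1}$ (here $\Sigma_1=\Sigma$), and $\Sigma_1$ lies above $\Sigma_2$ near $p_0$ with respect to $\eta_0=-T$. Since there is an elliptic point on $\Sigma_2$ and $H_k^{\Sigma_2}$ is constant, the condition $\lambda^{\Sigma_2}(0)\in\Gamma_k$ holds, so Theorem~\ref{tanprin} applies and $\Sigma$ coincides with $\p_{\tau_0}$ near $p_0$; connectedness of $\Sigma$ then yields $\Sigma=\p_{\tau_0}$, contradicting $f(\partial\Sigma)\subset\p_0$. (Note that in this direction the angle-sign hypothesis is not even needed; it is listed only because it is needed for part~$(ii)$.)

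\textbf{Part $(ii)$.} Suppose $h\ge 0$ fails, so $h$ attains an interior minimum $\tau_1<0$ at $p_1$. Take $\Sigma_1=\p_{\tau_1}$, $\Sigma_2=\Sigma$; then $\Sigma_1$ lies above $\Sigma_2$ near $p_1$ with respect to $\eta_0=-T$, and $H_k^{\Sigma_1}=\tanh^k\tau_1=\mathcal H_k(\tau_1)$. If $k$ is odd, $\tau_1<0$ gives $\tanh^k\tau_1<0\le H_k=H_k^{\Sigma_2}$; if $k$ is even, $\tanh^k\tau_1<1\le H_k=H_k^{\Sigma_2}$ by hypothesis. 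In either case $H_k^{\Sigma_2}\ge H_k^{\Sigma_1}$ near $0$. To invoke Theorem~\ref{tanprin} for $k\ge2$ we must check $\lambda^{\Sigma_2}(0)\in\Gamma_k$, i.e.\ that $p_1$ is (after the usual orientation choice) an elliptic point of $\Sigma$; this is where the angle-sign assumption enters. As in Proposition~\ref{propheight}, using \eqref{hessh} at the interior minimum $p_1$ and choosing the orientation so that $\Theta\Theta(p_1)$ has the right sign (here $\Theta(p_1)=\pm1$ since $\nabla h(p_1)=0$), one gets for every $v\in T_{p_1}\Sigma$
\[
0\le \Hess h(p_1)(v,v)=\mathcal H(\tau_1)\norm{v}^2+\Theta(p_1)\pair{Av,v}(p_1),
\]
and since $\mathcal H(\tau_1)=\tanh\tau_1<0$ this forces $\Theta(p_1)\pair{Av,v}(p_1)>0$; picking the orientation with $\Theta(p_1)=1$ (which is possible precisely because $\Theta$ has constant sign) shows $A(p_1)$ is positive definite, so $p_1$ is an elliptic point and $\lambda^{\Sigma_2}(0)\in\Gamma_k$. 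Theorem~\ref{tanprin} then gives that $\Sigma$ agrees with $\p_{\tau_1}$ near $p_1$, hence everywhere by connectedness, contradicting $f(\partial\Sigma)\subset\p_0$.

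\textbf{Main obstacle.} The only delicate point is verifying $\lambda^{\Sigma_2}(0)\in\Gamma_k$ in part~$(ii)$ when $k\ge2$: unlike in Proposition~\ref{proptanprin}$(i)$, where the slice is the elliptic hypersurface and positivity is automatic from $\mathcal H(\tau_0)>0$, here the elliptic object must be $\Sigma$ itself, and one needs the Hessian comparison at the interior minimum together with the constant-sign-of-$\Theta$ hypothesis to produce a genuine elliptic point — and one must be careful that the sign of $\mathcal H(\tau_1)=\tanh\tau_1$ is strictly negative (which uses $\tau_1<0$) so that the inequality is strict. Everything else is a routine specialization of the warped-product machinery to $\rho(t)=\cosh t$.
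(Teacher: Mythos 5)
Your part~$(i)$ matches the paper's proof exactly, and your overall strategy for part~$(ii)$ — use \eqref{hessh} at the interior minimum $p_1$ to detect ellipticity, then invoke the tangency principle — is also the paper's strategy. There is, however, a genuine gap in your handling of the orientation in part~$(ii)$.

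You establish $H_k^{\Sigma_2}\geq H_k^{\Sigma_1}$ with the given orientation (using the hypothesis $H_k\geq 0$ for $k$ odd, resp.\ $H_k\geq 1$ for $k$ even), and \emph{then} say you can ``pick the orientation with $\Theta(p_1)=1$''. But this flip sends $A\mapsto -A$ and hence $H_k\mapsto(-1)^kH_k$. When $k$ is even this is harmless and the inequality persists; when $k$ is odd and $\Theta\leq 0$ (so a flip is actually needed), the new $H_k^{\Sigma_2}$ becomes $\leq 0$, and the inequality you checked before no longer applies — nor does the hypothesis $H_k\geq 0$ survive the change of normal. So in the case $k$ odd, $\Theta\leq 0$ your appeal to Theorem~\ref{tanprin} is not justified as written. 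The paper splits into two subcases at this point: if $\Theta\geq 0$ (which can be arranged by flipping only when $k$ is even), $p_1$ is elliptic and the tangency principle applies; if $\Theta\leq 0$ and $k$ is odd, then $\pair{Av,v}(p_1)<0$ for all $v\neq 0$, i.e.\ $A(p_1)$ is negative definite, so $H_k(p_1)<0$ for $k$ odd, which \emph{directly} contradicts $H_k\geq 0$ — no tangency principle needed. That direct contradiction is the missing ingredient in your write-up; adding it makes the proof complete and brings it in line with the paper's.

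One small secondary remark: your observation that the angle-sign hypothesis is not used in part~$(i)$ is correct, and matches the paper, which likewise only invokes that hypothesis in part~$(ii)$.
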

\begin{proof}
Let us prove $(i)$ first. Assume $H_k\leq0$ and suppose by contradiction that $h\leq0$ is false. Then there exists a point $p_0\in\Sigma$ where the function $h$ attains its maximum and such that 
$\tau_0=h(p_0)>0$. Since $\mathcal{H}(h)=\tanh h$, it follows that $\mathcal{H}(\tau_0)>0$. Besides, it is clear that 
$$
0=\inf_{[0,+\infty)}\mathcal{H}_k(t)=\inf_{[0,+\infty)}(\tanh t)^k\leq(\tanh \tau_0)^k
$$ 
and we may apply the tangency principle as in Proposition \ref{proptanprin} to arrive to a contradiction.

Similarly, in case $(ii)$, assume by contradiction that $h\geq0$ is false. By the compactness of $\Sigma$ there exists a point $p_1$ where the function $h$ attains its minimum $\tau_1=h(p_1)<0$. Reasoning as above, if we prove that there exists an elliptic point on $\Sigma$, we can apply the same argument as in the proof of Proposition \ref{proptanprin} in order to obtain a contradiction. Concerning the existence of the elliptic point, since $p_1$ is a point of minimum for $h$, note that
using \eqref{hessh} and recalling that $\Theta(p_1)\pm1$, we have
\[
0\leq \Hess h(p_1)=\tanh (\tau_1)\norm{v}^2+\Theta(p_1)\pair{Av,v}(p_1)< \sgn \Theta \pair{Av,v}(p_1).
\]
Hence, if $\Theta\geq0$, and we can always reduce ourselves to this case by changing orientation when $k$ is even, the point $p_1$ is an elliptic point. On the other hand, if $\Theta\leq0$ and necessarily $k$ is odd, then
\[
\pair{Av,v}(p_1)<0.
\]
This gives a contradiction to $H_k\geq0$.
\end{proof}

In \cite{aliasdajczer} Al\'ias and Dajczer, using the tangency principle and exploiting the subharmonicity of the function $\sigma(h)H_1+\rho(h)\Theta$, were 
able to obtain the following height estimates for constant mean curvature hypersurfaces in pseudo-hyperbolic spaces.
\begin{theorem}[Theorem 3.9 in \cite{aliasdajczer}]\label{Theorem3.9AD}
Let $f:\Sigma\ra\erre\times_{\mathrm{e}^t}\p^n$ be a compact constant mean curvature  hypersurface satisfying $H_1\notin[0,1)$, with boundary 
$f(\partial\Sigma)\subset\p_\tau$ for some $\tau\in\erre$, whose angle function $\Theta$ does not change sign. Assume that $\ricc_\p\geq0$ and set 
$C=\log\big(H_1/(H_1-1)\big)$. Then,
\begin{enumerate}
\item if $H_1>1$ then $\tau\leq h\leq\tau+C$; 
\item if $H_1<0$ then $\tau+C\leq h\leq\tau$;
\item $H_1=1$ if and only if $h=\tau$ and $f(\Sigma)$ is contained in the slice $\p_\tau$.
\end{enumerate}
\end{theorem}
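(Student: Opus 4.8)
The plan is to specialize the warped product data to $\rho(t)=\mathrm{e}^t$, so that $\mathcal{H}(t)=\rho'(t)/\rho(t)\equiv 1$ and $\mathcal{H}_k(t)\equiv 1$ for every $k$, to fix the primitive $\sigma(t)=\int_\tau^t\rho(u)\,\di u=\mathrm{e}^t-\mathrm{e}^\tau$ (so that $\sigma$ vanishes on $\p_\tau$), and to study on the compact manifold $\Sigma$ the auxiliary function $\psi:=H_1\,\sigma(h)+\rho(h)\Theta$. First, Corollary \ref{coroet} with $k=1$ already yields $h\geq\tau$ when $H_1>1$, $h\leq\tau$ when $H_1<0$, and the equivalences $H_1=1\Leftrightarrow h\equiv\tau\Leftrightarrow f(\Sigma)\subset\p_\tau$, which is item (3); so from now on I may assume $\Sigma$ is not contained in a slice, and then, since $h\equiv\tau$ on $\partial\Sigma$, the height function attains an interior extremum with value different from $\tau$. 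Next, applying Lemma \ref{lemmasigmahtheta} with $k=0$ (whence $P_0=I$, $c_0=n$, $L_0=\Delta$, $H_0=1$, $\mathcal{H}'\equiv 0$ and $\nabla H_1=0$ because $H_1$ is constant) one computes
\[
\Delta\sigma(h)=n\,\mathrm{e}^h(1+\Theta H_1),\qquad \Delta(\rho(h)\Theta)=-n\,\mathrm{e}^h H_1-\frac{\Theta}{\mathrm{e}^h}\beta_0-n\,\mathrm{e}^h\Theta\bigl(nH_1^2-(n-1)H_2\bigr),
\]
so that, after the cancellations,
\[
\Delta\psi=-n(n-1)\,\mathrm{e}^h\,\Theta\,(H_1^2-H_2)-\frac{\Theta}{\mathrm{e}^h}\,\beta_0 .
\]

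Since $\beta_0$ of Lemma \ref{lemmasigmahtheta}(ii) is, for $k=0$, a sum of sectional curvatures of $\p$ over planes through ${\pi_\p}_*N$, the hypothesis $\ricc_\p\geq 0$ gives $\beta_0\geq 0$; combined with the elementary Newton inequality $H_1^2\geq H_2$, the displayed formula shows that $\Delta\psi$ has the sign of $-\Theta$, so $\psi$ is subharmonic where $\Theta\leq 0$ and superharmonic where $\Theta\geq 0$. To decide which alternative occurs, evaluate \eqref{lrh} with $k=0$ at an interior maximum (if $H_1>1$) or interior minimum (if $H_1<0$) of $h$: there $\nabla h=0$, so $\Theta=\pm1$, and, as $\mathcal{H}\equiv 1$, the identity reduces to $\Delta h=n(1+H_1\Theta)$, which is $\leq 0$ at a maximum and $\geq 0$ at a minimum. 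For $H_1>1$ (respectively $H_1<-1$) this forces $\Theta=-1$ at that point, hence $\Theta\leq 0$ everywhere because $\Theta$ does not change sign, so $\psi$ is subharmonic; the residual range $-1\leq H_1<0$ is handled in the same way, keeping track of the sign of $\Theta$ forced by this second-order information and using the superharmonic alternative if it forces $\Theta\geq 0$.

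It remains to run the maximum principle. Being subharmonic on the compact $\Sigma$, $\psi$ attains its maximum on $\partial\Sigma$, where $\sigma(h)=0$ and therefore $\psi=\mathrm{e}^\tau\Theta\leq 0$; hence $\psi\leq 0$ on $\Sigma$. Evaluating at an interior maximum $p_0$ of $h$, where $\Theta(p_0)=-1$, and writing $\tau_0:=\max_\Sigma h=h(p_0)$,
\[
0\geq\psi(p_0)=H_1(\mathrm{e}^{\tau_0}-\mathrm{e}^\tau)-\mathrm{e}^{\tau_0}=\mathrm{e}^{\tau_0}(H_1-1)-H_1\mathrm{e}^\tau,
\]
and since $H_1-1>0$ this gives $\mathrm{e}^{\tau_0}\leq\bigl(H_1/(H_1-1)\bigr)\mathrm{e}^\tau$, i.e. $\tau_0\leq\tau+C$; together with $h\geq\tau$ this is item (1). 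Item (2) follows identically: $\psi\leq 0$ again, and now evaluating at an interior minimum $p_1$ of $h$ with $\Theta(p_1)=-1$ and dividing by $H_1-1<0$ reverses the inequality to $h\geq h(p_1)\geq\tau+C$, which with $h\leq\tau$ is item (2).

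I expect the main obstacle to be the derivation of $\Delta\psi$ in Step 2: it requires specializing Lemma \ref{lemmasigmahtheta} to $k=0$ and tracking the cancellation that removes every term involving the ambient curvature, leaving only the manifestly signed quantities $H_1^2-H_2\geq 0$ and $\beta_0\geq 0$ — the latter being precisely where $\ricc_\p\geq 0$ enters. The second delicate point is the orientation bookkeeping: one must read off $\operatorname{sgn}\Theta$ from the value of $\Delta h$ at an interior extremum of $h$ before the maximum principle can be applied. Once these are settled the height bound is an immediate algebraic consequence.
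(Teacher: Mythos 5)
Your route --- the tangency principle (Corollary \ref{coroet}) for the one-sided bound, then the maximum principle applied to $\psi=H_1\sigma(h)+\rho(h)\Theta$ --- is precisely the one this paper attributes to Al\'ias--Dajczer and is the $k=1$ specialization of Proposition \ref{subharm} and Theorem \ref{ocho}; your computation of $\Delta\psi$, the whole of item (1), and item (3) are correct. One small imprecision: ``a sum of sectional curvatures of planes through ${\pi_\p}_*N$ is nonnegative when $\ricc_\p\geq0$'' is not a valid general principle; here it works because $\mu_{0,i}\equiv1$ and the projections of the orthonormal frame $E_1,\dots,E_n,N$, completed by $\partial_t$ (which projects to zero), compute a trace, so that $\beta_0$ is a positive multiple of $\ricc_\p({\pi_\p}_*N,{\pi_\p}_*N)$. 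That should be said explicitly, but it is easily repaired.

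The genuine gap is in item (2) on the range $-1\leq H_1<0$, which you dismiss with ``handled in the same way.'' It is not. At the interior minimum $p_1$ of $h$ your second-order test gives only $n\bigl(1+\Theta(p_1)H_1\bigr)\geq0$, and since $\Theta(p_1)=\pm1$ this is compatible with \emph{both} signs as soon as $H_1\geq-1$; so the sign of $\Theta$ is forced only for $H_1<-1$ (and for $H_1>1$ in item (1)), while your final paragraph for item (2) tacitly assumes $\Theta(p_1)=-1$. In the alternative $\Theta\geq0$ the scheme produces nothing: $\psi$ is then superharmonic, the minimum principle gives $\psi\geq\min_{\partial\Sigma}\mathrm{e}^{\tau}\Theta\geq0$, and at $p_1$ this reads $(1+H_1)\mathrm{e}^{\tau_1}\geq H_1\mathrm{e}^{\tau}$, which is automatically true when $-1\leq H_1<0$ and yields no lower bound on $h$; reversing the orientation replaces $H_1$ by $-H_1\in(0,1]$ and the subharmonic estimate $\psi\leq0$ then gives $(-H_1-1)\mathrm{e}^{\tau_1}\leq-H_1\mathrm{e}^{\tau}$, again vacuous because the coefficient of $\mathrm{e}^{\tau_1}$ is nonpositive. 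Nor can the configuration be excluded: caps of equidistant hypersurfaces in $\acca^{n+1}=\erre\times_{\mathrm{e}^t}\erre^n$ (Euclidean spheres crossing the ideal boundary) lying above the level of their Euclidean center, with boundary on a horosphere and oriented by the outer normal, have constant $H_1\in(-1,0)$, $\Theta\geq0$, and satisfy every hypothesis --- indeed the stated bound is attained in the limit case, so the estimate there is sharp but genuinely needs an argument beyond the function $\psi$. In short, your proof establishes (1), (3), and case (2) when $H_1<-1$ or when $\Theta\leq0$, but the subcase $-1\leq H_1<0$ with $\Theta\geq0$ is an open hole; this is presumably why the paper's own higher-order results (Proposition \ref{subharm}, Theorems \ref{ocho} and \ref{mainthmcosh}) are only stated for $H_k\geq1$, where the sign $\Theta\leq0$ is forced.
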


\begin{theorem}[Theorem 3.10 in \cite{aliasdajczer}]\label{Theorem3.10AD} 
Let $f:\Sigma\ra\erre\times_{\cosh t}\p^n$ be a compact hypersurface of constant mean curvature $H_1$.
Suppose that the boundary of $\Sigma$ satisfies $f(\partial\Sigma)\subset\p_0$ and that the angle function $\Theta$ does not change sign. 
Assume that $\ricc_\p\geq-1$ and set $\tanh C=1/H_1$. Then, 
\begin{enumerate}
\item if $H_1>1$ then $0\leq h\leq C$; 
\item if $H_1\leq-1$ then $C\leq h\leq 0$;
\item if $H_1=0$ then $h=0$ and $f(\Sigma)$ is contained in the slice $\p_0$.
\end{enumerate}
\end{theorem}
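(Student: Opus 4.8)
The plan is to follow Al\'ias and Dajczer: first use the tangency principle to fix the sign of the height function $h$, and then derive the sharp bound by applying the maximum principle to the auxiliary function $\phi=\sigma(h)H_1+\rho(h)\Theta=H_1\sinh h+\Theta\cosh h$, whose subharmonicity will be the analytic heart of the argument. Throughout I take $t_0=0$, so that $\sigma(t)=\sinh t$, $\rho(t)=\cosh t$ and $\mathcal{H}(t)=\tanh t$.

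I would first dispatch the qualitative parts. In case $(1)$, $H_1>1>0$, so Corollary \ref{corocosht}$(ii)$ (with $k=1$) gives $h\ge 0$; in case $(2)$, $H_1\le -1<0$, so Corollary \ref{corocosht}$(i)$ gives $h\le 0$; and in case $(3)$, $H_1=0$ satisfies the hypotheses of both parts of Corollary \ref{corocosht}, so $h\equiv 0$, i.e. $f(\Sigma)\subset\p_0$, which is the last assertion. Next I would fix the orientation. Since $\Theta$ has constant sign, suppose for contradiction that $\Theta\ge 0$ in case $(1)$. Then by \eqref{lrsigma} with $k=0$, $\Delta\sigma(h)=n(\sinh h+H_1\Theta\cosh h)\ge 0$ (using $h\ge 0$, $H_1>0$, $\Theta\ge 0$), so by the maximum principle $\sigma(h)$, hence $h$, attains its maximum on $\partial\Sigma$, forcing $h\le 0$ and thus $h\equiv 0$; but then $\Sigma$ lies in the totally geodesic slice $\p_0$ and \eqref{hessh} gives $A\equiv 0$, contradicting $H_1>1$. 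Hence $\Theta\le 0$ in case $(1)$, and symmetrically (working with $-\sigma(h)$) $\Theta\le 0$ in case $(2)$.

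For the sharp bounds I would carry out the following computation. From \eqref{lrsigma} with $k=0$, $H_1\Delta\sigma(h)=nH_1\sinh h+nH_1^2\hat{\Theta}$, where $\hat{\Theta}=\rho(h)\Theta$; and from Lemma \ref{lemmasigmahtheta}$(ii)$ with $k=0$, using that $H_1$ is constant,
\[
\Delta\hat{\Theta}=-nH_1\sinh h-\hat{\Theta}\Big(\frac{(n-1)\norm{\nabla h}^2+\beta_0}{\cosh^2 h}+n^2H_1^2-n(n-1)H_2\Big).
\]
Adding these, the terms $\pm nH_1\sinh h$ cancel, leaving
\[
\Delta\phi=-\hat{\Theta}\Big(n(n-1)(H_1^2-H_2)+\frac{(n-1)\norm{\nabla h}^2+\beta_0}{\cosh^2 h}\Big).
\]
The factor in parentheses is non-negative: $H_1^2-H_2\ge 0$ is the basic inequality $H_1^2\ge H_2$, and from the definition of $\beta_0$ together with $\ricc_\p\ge -1$ and $\cosh h\ge 1$ one checks $(n-1)\norm{\nabla h}^2+\beta_0\ge 0$. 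Since $\hat{\Theta}=\Theta\cosh h\le 0$, it follows that $\phi$ is subharmonic, so $\phi\le\max_{\partial\Sigma}\phi$; on $\partial\Sigma$ one has $h=0$, hence $\phi=\Theta\le 0$ there, whence $\phi\le 0$ on all of $\Sigma$. This reads $H_1\sinh h\le -\Theta\cosh h\le\cosh h$, i.e. $H_1\tanh h\le 1$. In case $(1)$ ($h\ge 0$, $H_1>0$) this gives $\tanh h\le 1/H_1=\tanh C$, hence $h\le C$; in case $(2)$ ($h\le 0$, $H_1<0$, so $H_1\sinh h\ge 0$) it gives $\abs{\tanh h}\le 1/\abs{H_1}$, hence $h\ge C$.

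The step I expect to be the main obstacle is the identity for $\Delta\phi$: extracting it cleanly from Lemma \ref{lemmasigmahtheta}$(ii)$ specialized to $k=0$, and in particular recognizing that the curvature term $\beta_0$ is exactly what the hypothesis $\ricc_\p\ge -1$ controls and that the factor $\cosh^{-2}h$, together with $\cosh h\ge 1$, is precisely what makes $(n-1)\norm{\nabla h}^2+\beta_0$ non-negative --- this last point being where $\rho=\cosh t$ is used crucially, and why here one may relax the curvature hypothesis to $\ricc_\p\ge -1$ rather than $\ricc_\p\ge 0$ as in the $\rho=\mathrm{e}^t$ case. The orientation bookkeeping and the passage from $\phi\le 0$ to the stated height bounds are then routine.
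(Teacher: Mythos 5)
The paper does not actually prove Theorem~\ref{Theorem3.10AD}: it quotes it from Al\'ias--Dajczer, describing their method in one sentence as ``using the tangency principle and exploiting the subharmonicity of the function $\sigma(h)H_1+\rho(h)\Theta$.'' Your proof reconstructs exactly that argument, and it is also structurally the $k=1$ specialization of the paper's own route for the higher-order case (Corollary~\ref{corocosht}, then Proposition~\ref{subharm}, then Theorem~\ref{mainthmcosh}): preliminary sign of $h$ from the tangency principle, orientation normalization to $\Theta\leq0$, subharmonicity of $\phi$, and the classical maximum principle. I checked the $\Delta\phi$ identity directly from \eqref{lrsigma} and Lemma~\ref{lemmasigmahtheta}$(ii)$ with $k=0$ (so $c_0=n$, $P_0=I$, $\nabla H_1=0$, $\mathcal{H}'(h)=\cosh^{-2}h$) and it is correct, as is the reduction of $\phi\le 0$ to the stated bounds in both signed cases.

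Two things are genuinely your own versus the paper's analogue. First, your orientation step runs the maximum principle on $\pm\sigma(h)$ under the contradiction hypothesis $\Theta\ge 0$ to force $h\equiv 0$ and hence $A\equiv 0$; Proposition~\ref{subharm} instead reads off $\Theta(p_0)=-1$ from $\Delta h\le 0$ at an interior maximum (and needs $\mathcal{H}(\tau_0)>0$ there, i.e.\ it silently excludes $h\equiv 0$). Both work; yours handles the $H_1\le -1$ case in the same stroke, which the paper's Proposition~\ref{subharm} does not state at all. Second, and this is the substantive point, you correctly isolate why the curvature hypothesis can be weakened to $\ricc_\p\ge -1$: with $\mu_{0,i}\equiv 1$ the sum $\beta_0$ is a Ricci-type trace, so one does not need a pointwise sectional bound $K_\p\ge -1$ as Proposition~\ref{subharm} assumes for general $k$. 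You flag this step as ``one checks'' --- fair at this level of detail, and it is where the actual Al\'ias--Dajczer computation lives, but I would still encourage you to write out the passage from $\ricc_\p\ge -1$ to $(n-1)\norm{\nabla h}^2+\beta_0\ge 0$, since the vectors ${\pi_\p}_*E_i$ are not $\p$-orthonormal and the rescaling by $\rho$ is precisely where the $\cosh^{-2}h$ factor you emphasize must be tracked. Apart from that, the proof is sound and complete, including case $(3)$, which the paper's higher-order statements also do not cover.
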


Using the results of the previous section we extend Theorems 3.9 and 3.10 in \cite{aliasdajczer} to 
constant higher order mean curvature hypersurfaces. 
We first need the following
\begin{prop}\label{subharm}
Let $f:\Sigma\ra\erre\times_{\rho}\p^n$ be a compact hypersurface of constant $k$-mean curvature $H_k\geq1$ in a pseudo-hyperbolic space with boundary $\partial \Sigma\subset \p_\tau$, for some $\tau\in\erre$, with $\tau=0$ when $\rho(t)=\cosh t$. Assume that the angle function $\Theta$ does not change sign and that, if $k\geq3$, there exists an elliptic point on $\Sigma$. If
\[
K_\p\geq\sup_\erre\{-\rho^2\mathcal{H}'\},
\] 
then $\Theta\leq0$ and the function $\phi=\sigma(h)H_k^{1/k}+\rho(h)\Theta$ satisfies
\[
L_{k-1}\phi\geq0.
\]
\end{prop}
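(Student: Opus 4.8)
The plan is to combine the height estimates already established with the sign information for $\Theta$, and then to compute $L_{k-1}\phi$ using the formulas of Lemma \ref{lemmasigmahtheta}, exploiting the curvature hypothesis to control the troublesome term $\beta_{k-1}$. First I would establish that $\Theta\le 0$. Since $H_k\ge 1$, in the case $\rho(t)=\mathrm{e}^t$ we have $H_k\ge 1=\sup_{(-\infty,\tau]}\mathcal H_k$ (indeed $\mathcal H_k\equiv 1$), and in the case $\rho(t)=\cosh t$ with $\tau=0$ we have $H_k\ge 1\ge\sup_{(-\infty,0]}(\tanh t)^k$; in either case Proposition \ref{proptanprin}$(ii)$ (or Corollaries \ref{coroet}, \ref{corocosht}) gives $h\ge\tau$. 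Now choose the orientation so that, at a boundary point, $\Theta$ has the appropriate sign; because $\Theta$ does not change sign and $h\ge\tau$ with equality on $\partial\Sigma$, an argument at an interior minimum of $h$ together with \eqref{hessh} (as in the proof of Proposition \ref{propheight}) forces $\Theta\le 0$ on all of $\Sigma$. Simultaneously this shows $\mathcal H(h)\ge 0$ wherever $h$ is not at its minimum, and since $\mathcal H'\ge 0$ for a pseudo-hyperbolic space (here $\mathcal H'=\rho''/\rho-(\rho'/\rho)^2\ge 0$ when $c=-1$), we get that $L_{k-1}$ is elliptic: either $H_2>0$ directly, or at an interior minimum of $h$ one produces an elliptic point exactly as in Proposition \ref{propheight}, and then the G\.arding chain \eqref{garding} applies.

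Next I would compute $L_{k-1}\phi$. Writing $\phi=\sigma(h)H_k^{1/k}+\hat\Theta$ with $\hat\Theta=\rho(h)\Theta$, and using \eqref{lrsigma} for the first term and part $(ii)$ of Lemma \ref{lemmasigmahtheta} for $L_{k-1}\hat\Theta$, one gets (since $H_k$ is constant, so $\nabla H_k=0$, and using $\mathrm{Tr}P_{k-1}=(n-k+1)S_{k-1}$, $\mathrm{Tr}AP_{k-1}=kS_k$):
\[
L_{k-1}\phi=c_{k-1}\rho(h)H_k^{1/k}\bigl(\mathcal H(h)H_{k-1}+\Theta H_k\bigr)-\rho'(h)c_{k-1}H_k-\hat\Theta\mathcal H'(h)\bigl(\norm{\nabla h}^2 c_{k-1}H_{k-1}-\pair{P_{k-1}\nabla h,\nabla h}\bigr)-\frac{\hat\Theta}{\rho(h)^2}\beta_{k-1}-\hat\Theta\binom{n}{k}\bigl(nH_1H_k-(n-k)H_{k+1}\bigr).
\]
Here $\nabla H_{k+1}$ does not vanish in general, but the only term involving it carried the factor $\nabla h$ and vanishes because $H_k$ is constant; I would double-check this against Section 6 of \cite{aliasimperarigoli}, adjusting the Newton-identity bookkeeping as needed.

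Now I would estimate each term using $\Theta\le 0$, hence $\hat\Theta=\rho\Theta\le 0$, and $\mathcal H(h)\ge 0$. The $\beta_{k-1}$ term: since $\mu_{k-1,i}\ge 0$ (ellipticity of $L_{k-1}$), $\beta_{k-1}\ge \bigl(\inf_\Sigma K_\p\bigr)\sum_i\mu_{k-1,i}\norm{\ldots}^2$, and $-\hat\Theta\ge 0$, so the hypothesis $K_\p\ge\sup_\erre\{-\rho^2\mathcal H'\}$ is tuned so that $-\tfrac{\hat\Theta}{\rho^2}\beta_{k-1}$ together with the $-\hat\Theta\mathcal H'(\ldots)$ term is $\ge 0$ — this is the crux, and it works because $\norm{\nabla h}^2 c_{k-1}H_{k-1}-\pair{P_{k-1}\nabla h,\nabla h}\ge 0$ (each eigenvalue of $P_{k-1}$ is at most $\mathrm{Tr}P_{k-1}$-type bound, or more precisely $\pair{P_{k-1}v,v}\le c_{k-1}H_{k-1}\norm v^2$ fails in general, so I must instead pair the $\mathcal H'$-term against the part of $\beta_{k-1}$ involving $\sum\mu_{k-1,i}$ and use $\sum_i\mu_{k-1,i}=c_{k-1}H_{k-1}$ together with $\norm{{\pi_\p}_*E_i\wedge{\pi_\p}_*N}^2\le\text{(something)}/\rho^2$). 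For the remaining terms, I would use the G\.arding inequalities \eqref{garding}: $H_k^{1/k}\mathcal H(h)H_{k-1}\ge \rho'/\rho\cdot H_k^{(k-1)/k}\cdot H_k^{1/k}\cdot$(ratio) and compare $c_{k-1}\rho H_k^{1/k}(\mathcal H H_{k-1}+\Theta H_k)-\rho' c_{k-1}H_k-\hat\Theta\binom nk(nH_1H_k-(n-k)H_{k+1})$; the terms $-\rho' c_{k-1}H_k$ and $c_{k-1}\mathcal H\rho H_k^{1/k}H_{k-1}=c_{k-1}\rho' H_k^{1/k}H_{k-1}\ge c_{k-1}\rho' H_k^{1/k}H_k^{(k-1)/k}=c_{k-1}\rho' H_k$ cancel favorably, while $-\hat\Theta(nH_1H_k-(n-k)H_{k+1})\ge 0$ follows from $nH_1H_k\ge nH_1H_k$ and Newton-type inequalities $H_1H_{k+1}\le H_k\cdots$ after checking signs — the point being every surviving term is a nonnegative multiple of $-\hat\Theta$ or of $\rho H_k^{1/k}$ times a nonnegative G\.arding difference.

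The main obstacle I anticipate is precisely the algebraic balancing in the third paragraph: isolating the contribution of $\beta_{k-1}$ proportional to $\sum_i\mu_{k-1,i}$ (which equals $c_{k-1}H_{k-1}$) so that the curvature bound $K_\p\ge -\rho^2\mathcal H'$ exactly absorbs the $\mathcal H'$-term, while simultaneously controlling the leftover $\beta_{k-1}$ contribution from the genuinely varying eigenvalues $\mu_{k-1,i}$, together with keeping careful track of the orientation-dependent signs throughout. Once every term on the right-hand side is exhibited as a sum of manifestly nonnegative quantities, $L_{k-1}\phi\ge 0$ follows. I would present the orientation/sign reduction and the $\Theta\le 0$ step in full, then organize the final estimate as a sequence of displayed inequalities citing \eqref{garding} and the hypothesis on $K_\p$, deferring the purely symmetric-function manipulations to a reference to \cite{aliasimperarigoli}.
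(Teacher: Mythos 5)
Your overall strategy—establish $h\ge\tau$ and $\Theta\le0$, compute $L_{k-1}\phi$ via Lemma~\ref{lemmasigmahtheta}, then bound term-by-term using G\.arding and the curvature hypothesis—is exactly the paper's approach, and your final formula for $L_{k-1}\phi$ matches. But two of the key steps as you present them do not actually go through as written.

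First, the sign of $\Theta$: you propose to read it off ``at an interior minimum of $h$ together with \eqref{hessh}.'' But having shown $h\ge\tau$ with equality on $\partial\Sigma$, the minimum of $h$ is attained \emph{on the boundary}, so there is in general no interior minimum to work with. The paper instead looks at the interior \emph{maximum} $p_0$ of $h$ and uses the Laplacian identity \eqref{lrh} (not the Hessian): $0\ge\Delta h(p_0)=n\mathcal H(\tau_0)+n\Theta(p_0)H_1(p_0)$, and since $\mathcal H(\tau_0)\ge0$ and $H_1(p_0)>0$ this forces $\Theta(p_0)=-1$, hence $\Theta\le0$ everywhere. Your Hessian-at-the-minimum version cannot be salvaged without this change.

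Second, the algebraic balancing of the non-curvature terms. After splitting $c_{k-1}\rho H_k^{1/k}(\mathcal H H_{k-1}+\Theta H_k)$, there remains the \emph{negative} contribution $\hat\Theta c_{k-1}H_k^{(k+1)/k}$ (since $\hat\Theta\le0$), and it must be absorbed into the bracket $-\hat\Theta\binom{n}{k}(nH_1H_k-(n-k)H_{k+1})$ using $c_{k-1}=k\binom{n}{k}$, giving the combined quantity
\[
-\hat\Theta\binom{n}{k}\bigl(nH_1H_k-(n-k)H_{k+1}-kH_k^{(k+1)/k}\bigr),
\]
which is then shown $\ge0$ via $nH_1H_k\ge nH_k^{(k+1)/k}$ and $H_{k+1}\le H_k^{(k+1)/k}$. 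Your write-up treats $-\hat\Theta(nH_1H_k-(n-k)H_{k+1})\ge0$ as standing alone and justifies it by the tautology ``$nH_1H_k\ge nH_1H_k$,'' so the $\hat\Theta c_{k-1}H_k^{(k+1)/k}$ term is left unaccounted for. Your handling of the $\beta_{k-1}$/$\mathcal H'$ block is essentially correct in outline (and, contrary to your hesitation, the needed inequality $\pair{P_{k-1}\nabla h,\nabla h}\le c_{k-1}H_{k-1}\norm{\nabla h}^2$ does hold once $P_{k-1}\ge0$, since $\sum_i\mu_{k-1,i}(\norm{\nabla h}^2-\pair{E_i,\nabla h}^2)\ge0$), but you should commit to the identity $\sum_i\mu_{k-1,i}\norm{{\pi_\p}_*E_i\wedge{\pi_\p}_*N}^2=c_{k-1}H_{k-1}\norm{\nabla h}^2-\pair{P_{k-1}\nabla h,\nabla h}$ rather than hedging.
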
   
\begin{proof}
Since $H_k\geq1$, Corollaries \ref{coroet} and \ref{corocosht} imply that $h\geq\tau$. In particular, $\mathcal{H}(h)\geq0$. 
Since $\Sigma^n$ is compact, there exists a point $p_0 \in \Sigma$ 
where the height function attains its maximum. Then $\nabla h(p_0)=0$, $\Theta(p_0)=\pm 1$ and by \eqref{lrh}
$$
0\geq\Delta h(p_0)=n\mathcal{H}(\tau_0)+n\Theta(p_0)H_1(p_0)> n\Theta(p_0)H_1(p_0).
$$
Then $\Theta(p_0)=-1$ and $\Theta$ is non-positive since we assumed that it does not change sign.

Let us prove that $L_{k-1} \phi \geq 0$.
Since $H_k$ is constant, recalling that $\hat{\Theta}=\rho\Theta$ and using Equation \eqref{lrsigma} and $(ii)$ of Lemma \ref{lemmasigmahtheta}, we have
\begin{align*}
L_{k-1}\phi=&H_k^{\frac 1k}L_{k-1} \sigma(h)+L_{k-1} \hat{\Theta} \\
=& H_k^{\frac 1k}c_{k-1}(\rho'(h)H_{k-1}+\hat{\Theta}H_k)-c_{k-1} H_{k-1}\hat{\Theta}
\norm{\nabla h}^2\mathcal{H}'(h)\\
&+ \hat{\Theta}\mathcal{H}'(h)\pair{P_{k-1} \nabla h, \nabla h}-\hat{\Theta}{n \choose k} (nH_1H_k-(n-k)H_{k+1})\\
&-\rho'(h)c_{k-1} H_k- \frac{\hat{\Theta}}{\rho(h)^2}\sum_{i=1}^n\mu_{k-1,i}K_{\p}({\pi_{\p}}_*E_i,{\pi_{\p}}_*N)\norm{{\pi_{\p}}_*E_i\wedge{\pi_{\p}}_*N}^2\\
=&A+B+C,
\end{align*}
where
$$
A=-\hat{\Theta}{n \choose k} (nH_1H_k-(n-k)H_{k+1}-kH_k^{\frac{k+1}{k}}),
$$
$$
B=c_{k-1}\rho'(h)(H_{k-1}H_k^{\frac 1k}-H_k)
$$
and
\begin{align*}
C=&-\hat{\Theta}\mathcal{H}'(h)(\norm{\nabla h}^2c_{k-1}H_{k-1}-\pair{P_{k-1} \nabla h,\nabla h})\\
&-\frac{\hat{\Theta}}{\rho(h)^2}\sum_{i=1}^n\mu_{k-1,i}K_{\p}({\pi_{\p}}_*E_i,{\pi_{\p}}_*N)\norm{{\pi_{\p}}_*E_i\wedge{\pi_{\p}}_*N}^2.
\end{align*}
Then by G{\.a}rding inequalities,
$$
H_{k-1}H_k^{\frac 1k}-H_k=H_k^{\frac 1k}(H_{k-1}-H_k^{\frac{k-1}{k}}) \geq 0.
$$
Moreover,
$$nH_1H_k-kH_k^{\frac{k+1}{k}} \geq nH_k^{\frac{k+1}{k}}-kH_k^{\frac{k+1}{k}}=(n-k)H_k^{\frac{k+1}{k}},
$$
hence
$$
nH_1H_k-kH_k^{\frac{k+1}{k}}-(n-k)H_{k+1} \geq (n-k)(H_k^{\frac{k+1}{k}}-H_{k+1})\geq 0.
$$
Finally, set $\alpha:=\sup_{\erre} \{{\rho'}^2-\rho''\rho\}$. Since
$$
\norm{{\pi_{\p}}_*E_i\wedge{\pi_{\p}}_*N}^2=\norm{\nabla h}^2-\pair{E_i,\nabla h}^2,
$$
taking into account that the $\mu_{k-1,i}$'s are positive, we have
\begin{align*}
&\sum_{i=1}^n\mu_{k-1,i}K_{\p}({\pi_{\p}}_*E_i,{\pi_{\p}}_*N)\norm{{\pi_{\p}}_*E_i\wedge{\pi_{\p}}_*N}^2\\
& \geq \alpha \sum_{i=1}^n\mu_{k-1,i}\norm{{\pi_{\p}}_*E_i\wedge{\pi_{\p}}_*N}^2\\
&=\alpha(c_{k-1}H_{k-1}\norm{\nabla h}^2-\pair{P_{k-1}\nabla h,\nabla h}).
\end{align*}
Hence,
\begin{align*}
\frac{1}{\rho(h)^2}&\sum_{i=1}^n\mu_{k-1,i}K_{\p}({\pi_{\p}}_*E_i,{\pi_{\p}}_*N)\norm{{\pi_{\p}}_*E_i\wedge{\pi_{\p}}_*N}^2\\
&+\mathcal{H}'(h)(\norm{\nabla h}^2c_{k-1}H_{k-1}-\pair{P_{k-1} \nabla h,\nabla h})\\
\geq & \Big(\frac{\alpha}{\rho(h)^2}+\mathcal{H}'(h)\Big)(\norm{\nabla h}^2c_{k-1}H_{k-1}-\pair{P_{k-1} \nabla h,\nabla h})
\geq 0,
\end{align*}
where the last inequality follows from $\alpha=\sup_\erre\{-\rho^2\mathcal{H}'\}$ and from the fact that $P_{k-1}$ is a positive definite operator. 
\end{proof}

Using the previous lemma we prove the following results.

\begin{theorem}\label{ocho}
Let $f:\Sigma\ra\erre\times_{\mathrm{e}^t}\p^n$ be a compact constant $k$-mean curvature hypersurface, $2\leq k\leq n$, with boundary 
$f(\partial\Sigma)\subset\p_\tau$ for some $\tau\in\erre$ and whose angle function $\Theta$ does not change sign. Assume that $K_\p\geq0$ and set 
$C=\log\big(H_k^{1/k}/(H_k^{1/k}-1)\big)$.
If  $H_k>1$ and there exists an elliptic point when $k\geq3$, then $\tau\leq h\leq\tau+C$. 
Furthermore, $H_k=1$ if and only if $h=\tau$.
\end{theorem}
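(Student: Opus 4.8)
The plan is to combine the one‑sided height control coming from the tangency principle with the maximum principle applied to the subharmonic‑type function of Proposition~\ref{subharm}. Throughout we have $\rho(t)=\mathrm{e}^t$, hence $\mathcal{H}\equiv 1$ and $\mathcal{H}'\equiv 0$, so that $\sup_\erre\{-\rho^2\mathcal{H}'\}=0$ and the hypothesis $K_\p\geq 0$ is precisely the curvature assumption required in Proposition~\ref{subharm}. First I would dispose of the lower bound: since $H_k>1$, Corollary~\ref{coroet}$(ii)$ applies (its extra requirement for $k\geq 3$ is the existence of an elliptic point, which we are granting, while for $k=2$ it is automatic because $H_2=H_k>0$), so $h\geq\tau$. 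I then normalise $\sigma$ by taking $t_0=\tau$, so that $\sigma(t)=\int_\tau^t\mathrm{e}^u\,\di u=\mathrm{e}^t-\mathrm{e}^\tau$, $\sigma(\tau)=0$, and $\sigma(h)\geq 0$ on $\Sigma$.

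Next, Proposition~\ref{subharm} applies verbatim (the boundary lies in a slice $\p_\tau$, with no restriction on $\tau$ since $\rho\neq\cosh t$), yielding $\Theta\leq 0$ on $\Sigma$ and $L_{k-1}\phi\geq 0$ for $\phi=\sigma(h)H_k^{1/k}+\rho(h)\Theta$. The operator $L_{k-1}$ is elliptic here: for $k=2$ this is the ellipticity of $L_1$ guaranteed by $H_2>0$, and for $k\geq 3$ the existence of an elliptic point together with $H_k$ being constant forces $P_j$ to be positive definite for $1\leq j\leq k-1$, as recalled in the Introduction. Applying the maximum principle for $L_{k-1}$ on the compact manifold $\Sigma$, the function $\phi$ attains its maximum on $\partial\Sigma$; but on $\partial\Sigma$ we have $h=\tau$, hence $\sigma(h)=0$ and $\phi=\mathrm{e}^\tau\Theta\leq 0$. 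Therefore $\phi\leq 0$ on all of $\Sigma$.

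It remains to extract the estimate. From $\phi\leq 0$ and $-\Theta=\abs{\Theta}\leq 1$ we get $\sigma(h)H_k^{1/k}\leq-\rho(h)\Theta\leq\rho(h)$, that is $(\mathrm{e}^h-\mathrm{e}^\tau)H_k^{1/k}\leq\mathrm{e}^h$; rearranging and using $H_k^{1/k}>1$ gives $\mathrm{e}^{h-\tau}\leq H_k^{1/k}/(H_k^{1/k}-1)$, so $h\leq\tau+C$, which together with $h\geq\tau$ proves the two‑sided bound. For the last assertion, if $H_k=1$ then Corollary~\ref{coroet} gives both $h\leq\tau$ and $h\geq\tau$, so $h\equiv\tau$; conversely, if $h\equiv\tau$ then $f(\Sigma)\subseteq\p_\tau$, which is the totally umbilical slice of constant $k$-mean curvature $(\rho'(\tau)/\rho(\tau))^k=1$, whence $H_k=1$.

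I do not expect a genuine obstacle here, since the computational heart of the argument is already contained in Proposition~\ref{subharm}. The two points that require a little care are verifying that $L_{k-1}$ is truly elliptic — so that the strong maximum principle, rather than a mere weak comparison, is available — and the bookkeeping in normalising $\sigma$ so that it vanishes on the boundary slice, which is what makes the boundary value of $\phi$ come out non‑positive.
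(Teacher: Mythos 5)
Your proof is correct and follows the same line as the paper's: apply Corollary~\ref{coroet} for the lower bound, invoke Proposition~\ref{subharm} to get $\Theta\leq0$ and the $L_{k-1}$-subharmonicity of $\phi$, then use ellipticity of $L_{k-1}$ and the maximum principle to deduce the upper bound. The only (immaterial) difference is that you normalise $\sigma$ with $t_0=\tau$, so that $\phi\leq0$ on all of $\Sigma$, whereas the paper effectively takes $\sigma(t)=\mathrm{e}^t$ and bounds $\phi\leq\mathrm{e}^\tau H_k^{1/k}$ — since the two choices of $\phi$ differ by the additive constant $\mathrm{e}^\tau H_k^{1/k}$ (killed by $L_{k-1}$), the two computations are identical in substance.
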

\begin{proof}
By Corollary \ref{coroet}, when $H_k>1$ we observe that $h\geq\tau$ and $H_k=1$ if and only if $h\equiv\tau$. Furthermore, the existence of the elliptic point and the fact that $\Theta$ does not change sign, together with $K_\p\geq0$, imply by Proposition \ref{subharm} that $\Theta\leq0$ and 
\[
L_{k-1}\phi\geq 0,
\]
where $\phi=\mathrm{e}^h(H_k^{1/k}+\Theta)$. Since $L_{k-1}$ is, in the present situation, elliptic, the classical maximum principle implies
\[ 
\mathrm{e}^h(H_k^{1/k}-1)\leq\phi\leq\max_{\partial\Sigma}\phi=\mathrm{e}^{\tau}(H_k^{1/k}+\max_{\partial\Sigma}\Theta)\leq\mathrm{e}^{\tau}H_k^{1/k}.
\]
Therefore, if $H_k>1$ we have
\[
\mathrm{e}^h\leq\mathrm{e}^\tau\frac{H_k^{1/k}}{H_k^{1/k}-1},
\]
and the conclusion follows at once.

\end{proof}

\begin{theorem}\label{mainthmcosh} 
Let $f:\Sigma\ra\erre\times_{\cosh t}\p^n$ be a compact hypersurface of constant $k$-mean curvature $H_k\geq1$, $2\leq k\leq n$.
Suppose that the boundary of $\Sigma$ satisfies $f(\partial\Sigma)\subset\p_0$ and that the angle function $\Theta$ does not change sign. 
Assume that $K_\p\geq-1$ and set $\tanh C=1/H_k^{1/k}$. Then $0\leq h\leq C$.
\end{theorem}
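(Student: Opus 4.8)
The plan is to argue exactly as in Theorem \ref{ocho}, now with $\rho(t)=\cosh t$ and $\tau=0$: combine the lower bound $h\ge0$ coming from Corollary \ref{corocosht} with Proposition \ref{subharm} and the maximum principle for $L_{k-1}$.

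First I would record that $h\ge0$. Since $H_k\ge1$, in particular $H_k\ge0$ if $k$ is odd and $H_k\ge1$ if $k$ is even, so part $(ii)$ of Corollary \ref{corocosht} applies verbatim. Next, in order to invoke Proposition \ref{subharm} when $k\ge3$ I must produce an elliptic point on $\Sigma$; the statement of the theorem does not assume one, but here it is automatic. Indeed, $h$ cannot vanish identically: otherwise $f(\Sigma)$ would lie in the slice $\p_0$, which is totally umbilical with $\mathcal{H}_k(0)=(\tanh 0)^k=0$, contradicting $H_k\ge1$. Since $h$ vanishes on $\partial\Sigma$ and $h\ge0$, the value $\tau_0:=\max_\Sigma h$ is therefore positive and is attained at an interior point $p_0$, where $\nabla h(p_0)=0$ and hence $\Theta(p_0)=\pm1$. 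Evaluating \eqref{hessh} at $p_0$ gives $0\ge\Hess h(p_0)(v,v)=\tanh(\tau_0)\norm{v}^2+\Theta(p_0)\pair{Av,v}(p_0)$ for every $v$, and since $\tanh\tau_0>0$ this forces the symmetric operator $\Theta(p_0)A$ to be negative definite at $p_0$; thus $p_0$ is an elliptic point.

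Then I would apply Proposition \ref{subharm}. For $\rho(t)=\cosh t$ one has $\mathcal{H}(t)=\tanh t$, so $\mathcal{H}'(t)=\cosh^{-2}t$ and $-\rho^2\mathcal{H}'\equiv-1$; hence $\sup_\erre\{-\rho^2\mathcal{H}'\}=-1$ and the hypothesis $K_\p\ge-1$ is precisely the one required by the proposition. It yields $\Theta\le0$ together with $L_{k-1}\phi\ge0$, where $\phi=\sigma(h)H_k^{1/k}+\cosh(h)\Theta=\sinh(h)H_k^{1/k}+\cosh(h)\Theta$ (taking $t_0=\tau=0$ in the definition of $\sigma$, so that $\sigma(0)=0$). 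Because $H_k$ is a positive constant and $\Sigma$ carries an elliptic point --- and when $k=2$ simply because $H_2=H_k\ge1>0$ --- the operator $L_{k-1}$ is elliptic, so the classical maximum principle gives $\phi\le\max_{\partial\Sigma}\phi$. On $\partial\Sigma$ one has $h=0$, hence $\phi=\Theta\le0$ there, so $\phi\le0$ on all of $\Sigma$.

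Finally, from $\sinh(h)H_k^{1/k}+\cosh(h)\Theta\le0$ together with $-1\le\Theta\le0$ I obtain $\sinh(h)H_k^{1/k}\le-\cosh(h)\Theta\le\cosh(h)$, that is $\tanh h\le 1/H_k^{1/k}=\tanh C$; since $\tanh$ is increasing this means $h\le C$, and together with $h\ge0$ this is the assertion. I expect the only genuine subtlety to lie in checking that Proposition \ref{subharm} is applicable --- namely the automatic existence of the elliptic point and the identity $\sup_\erre\{-\rho^2\mathcal{H}'\}=-1$ for $\rho=\cosh$ --- since once these are in place the conclusion is a direct maximum-principle estimate entirely parallel to Theorem \ref{ocho}.
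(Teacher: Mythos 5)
Your proof is correct and follows essentially the same approach as the paper: $h\geq0$ from Corollary \ref{corocosht}, an elliptic point located at the interior maximum of $h$ via the Hessian inequality \eqref{hessh}, Proposition \ref{subharm} (with $\sup_\erre\{-\rho^2\mathcal{H}'\}=-1$ for $\rho=\cosh$) to obtain $\Theta\leq0$ and $L_{k-1}\phi\geq0$, the maximum principle, and then unravelling $\phi\leq0$ using $\Theta\geq-1$. The only stylistic difference is that the paper's proof separates cases on the sign of $\Theta$ and the parity of $k$ to make explicit that the elliptic point is compatible with the orientation for which $H_k\geq1$ (flipping when $k$ is even, and deriving a contradiction from $\Theta\geq0$ when $k$ is odd), whereas your shorter ``$\Theta(p_0)A$ negative definite'' observation leaves this compatibility implicit; it is in fact automatic, since $\Theta(p_0)=+1$ with $k$ odd would force $H_k(p_0)<0$, contradicting $H_k\geq1$.
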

\begin{proof}
Since $H_k\geq 1>0$, by Corollary \ref{corocosht} the height function $h$, and hence the function $\mathcal{H}(h)$, has to be non-negative. Let us denote by $p_0$ 
the point where $h$ attains its maximum. If $\Theta\leq0$, reasoning as in the Proof of Proposition \ref{propheight}, that is, using the differential inequality \eqref{hessellipt}, we find that there exists a point 
where all the principal curvatures are positive, indeed $\mathcal{H}(h(p_0))>0$. If $k$ is even we can always choose the orientation on $\Sigma$ so that $\Theta\leq0$. On the other hand, if $k$ is odd and $\Theta\geq0$, then
$$
0\geq \Hess h(p_0)(v,v)=\tanh (h(p_0))\pair{v,v}+\Theta(p_0)\pair{Av,v}(p_0)> 
\pair{Av,v}(p_0),
$$
for any $v\neq0$, contradicting the assumption on the sign of $H_k$. Summarizing, we can assume without loss on generality that  
$\Theta\leq0$. Then $p_0$ is an elliptic point on $\Sigma$, $H_j>0$ and each $L_{j-1}$ is elliptic for any $1\leq j\leq k$. 
Applying Proposition \ref{subharm}, we obtain that the function $\phi=\sinh h H_k^{1/k}+\cosh h \Theta$ is subharmonic with 
respect to $L_{k-1}$. Then, by the classical maximum principle
\[
\phi\leq\max_{\partial\Sigma}\phi=\sinh 0\, H_k^{1/k}+\cosh 0\,\max_{\partial\Sigma}\Theta\leq 0.
\]
On the other hand
\[
\sinh h\,H_k^{1/k}-\cosh h\leq \phi,
\]
so that, from the above,
\[
\sinh h\,H_k^{1/k}\leq\cosh h.
\]
Dividing both sides by $\cosh h$, we obtain
\[
\tanh h \leq \frac{1}{H_k^{1/k}},
\]  
that is, the desired conclusion.
\end{proof}
Observe that in case $\p^n\equiv\acca^n$, we recover a result obtained by Rosenberg when $\Sigma$ is a graph 
(see \cite[Theorem 6.2]{Rosenberg}).
\begin{corollary}
Let $f:\Sigma\ra \acca^{n+1}$ be a compact hypersurface of constant $k$-mean curvature $H_k\geq1$, $1\leq k\leq n$, 
with boundary $f(\partial\Sigma)\subset\acca^n$ and whose angle function $\Theta$ does not change sign. 
Then $|h|\leq \mathrm{arctanh}(1/H_k^{1/k})$.
\end{corollary}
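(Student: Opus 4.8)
The plan is to realize the hyperbolic space $\acca^{n+1}$ as the warped product $\erre\times_{\cosh t}\acca^n$, as recalled in the discussion preceding Corollary \ref{coroet}, so that the totally geodesic hyperbolic hyperplane $\acca^n$ appears as the slice $\p_0$. In this model the fiber is $\p^n=\acca^n$, whose sectional curvature is constant equal to $-1$, so the hypothesis $K_\p\geq-1$ of Theorem \ref{mainthmcosh} is automatically satisfied (with equality). Thus the corollary should follow by a direct application of Theorem \ref{mainthmcosh}, together with the lower estimate $h\geq 0$ already contained in Corollary \ref{corocosht}, once we have dealt with two minor points: the range of $k$ (the corollary allows $k=1$, whereas Theorem \ref{mainthmcosh} is stated for $2\leq k\leq n$) and the fact that the conclusion is phrased as a bound on $|h|$ rather than on $h$ itself.

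First I would fix the identification $\acca^{n+1}\cong\erre\times_{\cosh t}\acca^n$ and observe that, under it, $f(\partial\Sigma)\subset\acca^n$ becomes $f(\partial\Sigma)\subset\p_0$, while $\mathcal{H}(t)=\tanh t$ and $\mathcal{H}_k(t)=(\tanh t)^k$, so the slices are the equidistant hypersurfaces and $\p_0$ is totally geodesic. Since $H_k\geq 1>0$ and $\Theta$ does not change sign, Corollary \ref{corocosht}$(ii)$ gives $h\geq 0$; note that when $k$ is even we may need to switch orientation to reduce to the case $\Theta\leq 0$, exactly as in the proof of that corollary and of Theorem \ref{mainthmcosh}. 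Then for $2\leq k\leq n$, Theorem \ref{mainthmcosh} applied with $\p^n=\acca^n$ and $K_\p\equiv-1$ yields $0\leq h\leq C$ with $\tanh C=1/H_k^{1/k}$, i.e. $C=\operatorname{arctanh}(1/H_k^{1/k})$; combined with $h\geq 0$ this gives $|h|=h\leq\operatorname{arctanh}(1/H_k^{1/k})$, which is the claim.

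For the remaining case $k=1$ I would invoke Theorem \ref{Theorem3.10AD} (Theorem 3.10 in \cite{aliasdajczer}) instead: with $\p^n=\acca^n$ we have $\ricc_\p=-(n-1)\geq-(n-1)$, but more to the point the hypothesis there is $\ricc_\p\geq-1$ in their normalization for $n$-dimensional fibers; since $\acca^n$ is Einstein with $\ricc_\p=-(n-1)\langle\,,\rangle_\p$ the case $k=1$ in $\acca^{n+1}$ is covered (alternatively, one simply notes that for $k=1$ the corollary is exactly the upper bound in Theorem \ref{Theorem3.10AD}(1) with $\tanh C=1/H_1$, together with $h\geq 0$). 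Either way, for $H_k=1$ one should also remark that equality forces $h\equiv 0$, consistently with Corollary \ref{corocosht} and with $\operatorname{arctanh}(1)=+\infty$ being a vacuous bound; but since the statement only asserts $|h|\leq\operatorname{arctanh}(1/H_k^{1/k})$ this needs no separate argument.

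The main obstacle, such as it is, is not analytic but bookkeeping: one must make sure the warped-product model of $\acca^{n+1}$ is the \emph{same} one underlying Theorem \ref{mainthmcosh} (the $\cosh t$ foliation by totally geodesic $\acca^n$'s), so that ``boundary on $\acca^n$'' genuinely means ``boundary on the slice $\p_0$'', and that the curvature normalization $K_\p\geq-1$ matches the curvature $-1$ of the fiber $\acca^n$; and one must handle the two endpoint ranges $k=1$ and $k\in\{2,\dots,n\}$ by citing the correct previous result. Since $\acca^n$ has $K_\p\equiv-1$, the hypothesis $K_\p\ge\sup_\erre\{-\rho^2\mathcal H'\}=\sup_\erre\{-\cosh^2 t\cdot\operatorname{sech}^2 t\}=-1$ in Proposition \ref{subharm} is met with equality, so no strict inequality is lost. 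No step requires new estimates beyond those already established, so the proof is essentially a one-line deduction once the model is pinned down.
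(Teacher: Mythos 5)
Your proof is correct and is precisely the paper's intended argument, which the paper compresses to a single sentence: recall the description $\acca^{n+1}=\erre\times_{\cosh t}\acca^n$. Under that identification the fiber $\acca^n$ has $K_\p\equiv-1$, so $K_\p\geq\sup_\erre\{-\rho^2\mathcal{H}'\}=-1$ holds with equality; Corollary \ref{corocosht} gives $h\geq 0$, Theorem \ref{mainthmcosh} gives $h\leq\operatorname{arctanh}(1/H_k^{1/k})$ for $2\leq k\leq n$, and Theorem \ref{Theorem3.10AD} handles $k=1$, which is exactly what you do. On the one point you flag but leave hedged: $\acca^n$ is Einstein with $\ricc_\p=-(n-1)\pair{\,,}_\p$, so the hypothesis of Theorem \ref{Theorem3.10AD} as printed, $\ricc_\p\geq -1$, is not literally met for $n\geq 3$; the intended normalization (consistent with the $K_\p\geq-1$ hypothesis used for $k\geq 2$ and with the proof in Al\'ias--Dajczer) is $\ricc_\p\geq-(n-1)$, under which $\acca^n$ again satisfies the bound with equality, so your instinct that this is a bookkeeping artifact rather than a genuine obstruction is right.
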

\begin{proof}
Simply recall the description $\acca^{n+1}=\erre\times_{\cosh t}\acca^n$ given above.
\end{proof}
\section{Geometric applications}
In \cite{HLR} height estimates for compact graphs of constant mean curvature in three-dimensional 
product manifolds are used in order to obtain some information on the topology at infinity of 
properly embedded surfaces of constant mean curvature. The proofs of these results are essentially 
based on the use of the Alexandrov reflection principle, exploiting the fact that horizontal 
translations are isometries, as well as reflection through each $\p_\tau$. The same technique is 
used in \cite{ChengRosenberg} to study the topology at infinity of hypersurfaces of constant higher 
order mean curvature properly embedded in $(n+1)$-dimensional product manifolds. Unfortunately the 
same technique is not applicable to hypersurfaces in warped products spaces since the Alexandrov 
reflection principle does not work in this context. Nevertheless, using the height estimates that we 
have found in the previous section, we are still able to prove topological results for non-compact 
hypersurfaces of constant mean and higher 
order mean curvature in pseudo-hyperbolic spaces in the same spirit of those described above, even 
replacing the assumption of embeddedness as in \cite{ChengRosenberg} by one on the 
angle function.\\

We begin introducing the following
\begin{definition}
Let $\Sigma$ be a hypersurface in a warped product space $\erre\times_\rho\p^n$. We say that $\Sigma$ \textit{lies in an upper or lower half-space} if it is respectively contained in a region of $\erre\times_\rho\p^n$ of the form
\[
[a,+\infty)\times\p^n\quad\text{or}\quad(-\infty,a]\times\p^n,
\]
for some real number $a$.
\end{definition}
For what it is concerned with hypersurfaces of constant mean curvature in $\erre\times_{\mathrm{e}^t}\p^n$ we prove the following
\begin{theorem}\label{thmendh}
Let $\Sigma$ be a non-compact hypersurface (without boundary) of 
constant mean curvature $H_1\notin [0,1)$, properly immersed in a pseudo-hyperbolic space $\erre\times_{\mathrm{e}^t}\p^n$. 
Suppose that the fiber $\p^n$ is compact and has Ricci curvature satisfying $\ricc_\p\geq 0$ and that the angle function $\Theta$ does 
not change sign. 
Then the hypersurface can not lie in a half-space. In particular, $\Sigma$ must have at least one top and one bottom end. 
\end{theorem}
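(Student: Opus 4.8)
The plan is to argue by contradiction using the height estimate of Theorem~\ref{ocho}. Suppose $\Sigma$ lies in a half-space, say an upper half-space $[a,+\infty)\times\p^n$ (the lower case is analogous, and we must also treat the possibility of being squeezed between two slices). The idea is that properness together with the compactness of the fiber forces the height function $h$ to realize a boundary-type extremum that contradicts the quantitative bound obtained for compact pieces. Concretely, I would first show that, after possibly changing orientation, the hypothesis $H_1\notin[0,1)$ and the non-vanishing angle function let us reduce to the case $H_1>1$ and $\Theta\le 0$, exactly as in the proof of Proposition~\ref{subharm} (evaluate $\Delta h$ at an interior maximum of $h$ restricted to a large compact piece). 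Note that here $k=1$, so $L_0=\Delta$ is automatically elliptic, $\sigma(h)=\mathrm{e}^h$ up to an additive constant, and the function $\phi=\mathrm{e}^h(H_1+\Theta)$ is subharmonic on all of $\Sigma$ by Proposition~\ref{subharm} (with $\mathcal{H}'\equiv 0$ when $\rho=\mathrm{e}^t$, so the curvature hypothesis reads $K_\p\ge 0$, which follows from $\ricc_\p\ge 0$ by taking traces — or one reworks the $\beta_k$ term directly with the Ricci bound as in \cite{aliasdajczer}).

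Next I would exploit properness. Since $\p^n$ is compact and $f$ is proper, for every $T$ the set $f^{-1}([a,T]\times\p^n)$ is compact in $\Sigma$. If $\Sigma$ lay in the upper half-space $[a,+\infty)\times\p^n$, then $\inf_\Sigma h =: \tau$ is finite and, I claim, is attained: pick a sequence $p_j$ with $h(p_j)\to\tau$; these all lie in $f^{-1}([a,\tau+1]\times\p^n)$, which is compact, so a subsequence converges to some $p_\infty$ with $h(p_\infty)=\tau$. Now consider a small slice $\p_{\tau'}$ with $\tau'$ slightly below $\tau$ together with the compact region $f^{-1}([a, \tau+1]\times\p^n)$; the point is that on this compact piece $\Sigma'$ the height function attains its minimum at the interior point $p_\infty$, and applying the subharmonicity of $\phi$ together with the maximum principle on $\Sigma'$ — comparing the interior value $\phi(p_\infty)=\mathrm{e}^{\tau}(H_1-1)>0$ against the behavior near the ``upper boundary'' level $\tau+1$ — yields a bound $h\le \tau + C$ with $C=\log(H_1/(H_1-1))$ on all of $\Sigma$ (since $\tau$ can be taken as the global infimum and $\Sigma'$ exhausts the relevant region). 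Combined with $\Sigma$ being unbounded above — which is forced because a properly immersed hypersurface without boundary in $\erre\times_{\mathrm{e}^t}\p^n$ with $\p^n$ compact cannot be compact, hence $h$ is unbounded on $\Sigma$ in at least one direction — we get a contradiction as soon as $\Sigma$ escapes the slab $[\tau,\tau+C]\times\p^n$.

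The main obstacle, and the step requiring the most care, is making the ``extremum at infinity'' argument rigorous: a properly immersed hypersurface need not attain $\inf h$ unless one uses compactness of the fiber correctly, and one must ensure that the maximum principle is being applied on a genuinely compact manifold-with-boundary whose boundary sits in a controlled slice. The cleanest route is the Alexandrov-type exhaustion used for $k=1$ in \cite{aliasdajczer}: for $c$ large, the piece $\Sigma_c = f^{-1}((-\infty,c]\times\p^n)$ is compact with boundary in $\p_c$; apply Theorem~\ref{ocho}'s proof verbatim to $\Sigma_c$ with boundary slice $\p_c$ to get $h \ge c - C$ on $\Sigma_c$ — but this says nothing unless we also bound $h$ from below globally. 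If $\Sigma$ lies in the lower half-space $(-\infty,a]\times\p^n$ the inequality $h\ge c-C$ for all large $c$ (letting $c\to a$, but $\Sigma_c$ eventually equals $\Sigma$) forces $\Sigma$ into the fixed slab $[a-C,a]\times\p^n$, hence contained in a slab; then one invokes Proposition~\ref{proptanprin} or Corollary~\ref{coroet} to conclude $h\equiv a$, i.e. $\Sigma$ is a compact slice, contradicting non-compactness. The upper half-space case is handled symmetrically after the orientation reduction (where $H_1<0$ and one uses the lower estimate of Theorem~\ref{Theorem3.9AD}). In all cases the contradiction is: properness $+$ half-space $\Rightarrow$ slab $\Rightarrow$ compact slice, against the standing hypothesis. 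Finally, once $\Sigma$ cannot lie in either half-space, $h$ is unbounded both above and below, and properness gives that $\Sigma$ has at least one end on which $h\to+\infty$ (a top end) and one on which $h\to-\infty$ (a bottom end).
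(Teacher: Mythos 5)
Your general strategy --- truncate $\Sigma$ by a slice, observe the truncation is compact with boundary in a slice, and apply the compact-case height estimate --- is the paper's strategy, but your execution has several genuine gaps. Most seriously, the quantitative estimate you invoke is misstated: for $H_1>1$ and a compact piece with boundary in $\p_c$, Theorem~\ref{Theorem3.9AD} gives $c\le h\le c+C$, not ``$h\ge c-C$''. With the truncation $\Sigma_c=\{h\le c\}$ the inequality $h\ge c$ alone already contradicts $h\le c$ and finishes the argument; your weaker (and incorrect) $h\ge c-C$ does not, and the whole ``force $\Sigma$ into the slab $[a-C,a]$'' step built on it collapses. In addition, for the lower half-space $(-\infty,a]\times\p^n$ the set $\Sigma_c=\{h\le c\}$ need not be compact at all --- $\Sigma$ may be unbounded below and $(-\infty,c]\times\p^n$ is not compact even with $\p^n$ compact --- so properness gives nothing. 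The paper instead uses the opposite truncation $\Sigma_\tau=\{h\ge\tau\}$ for the lower half-space, which \emph{is} compact because $\Sigma_\tau\subset f^{-1}([\tau,a]\times\p^n)$ with $[\tau,a]\times\p^n$ compact; your proposal conflates the two cases, and this asymmetry in the choice of truncation is exactly the point.

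Two further problems: your middle paragraph applies the maximum principle on $\Sigma'=\{h\le\tau+1\}$ whose boundary sits at the \emph{top} level $\tau+1$. Subharmonicity of $\phi$ then gives $\phi\le\max_{\partial\Sigma'}\phi\le\mathrm{e}^{\tau+1}H_1$, hence $h\le\tau+1+C$ on $\Sigma'$, which is vacuous since $h\le\tau+1$ there by construction and $C>0$; the useful bound $h\le\tau+C$ requires the boundary at the \emph{bottom} of the height range. And at the very end you invoke Proposition~\ref{proptanprin} or Corollary~\ref{coroet} to conclude $h\equiv a$, but those apply to compact hypersurfaces \emph{with boundary} in a slice --- $\Sigma$ has none, so neither applies. (Also, $\ricc_\p\ge 0$ does not imply $K_\p\ge 0$; the implication runs the other way, which is why the $k=1$ statement cited by the paper is Theorem~\ref{Theorem3.9AD} with the Ricci hypothesis, not Theorem~\ref{ocho} with the sectional-curvature hypothesis.)
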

\begin{proof}
Suppose that $\Sigma$ lies in a half-space of the form $(-\infty,a]\times\p^n$, $a\in\erre$. 
For any $\tau\in\erre$, $\tau<a$, denote by $\Sigma_\tau$ the hypersurface
\[
\Sigma_{\tau}=\{(t,x)\in\Sigma \, |\, t\geq\tau\}.
\]
Observe that $\Sigma_{\tau}$ is contained in a slab of width $a-\tau$. Moreover, as before, since $\p$ is compact and the immersion is proper, 
$\Sigma_{\tau}$ is compact with boundary contained in $\p_{\tau}$. Furthermore, it satisfies the assumptions of Theorem 
\ref{Theorem3.9AD} and we conclude that either $H_1<0$ and $h\leq\tau$, giving a contradiction, or $H_1\geq1$ and 
$\Sigma_{\tau}$ is contained in a slab of width $\log\big(H_1/(H_1-1)\big)$, so that it must be $a-\tau\leq\log\big(H_1/(H_1-1)\big)$. Choosing $\tau$ sufficiently small we violate this 
estimate, reaching a contradiction. 
On the other hand, if $\Sigma$ is contained in a half-space of the form $[a,+\infty)\times\p^n$, 
for any 
$\tau\in\erre$, $\tau>a$, denote by $\Sigma_{\tau}$ the hypersurface
\[
\Sigma_{\tau}=\{(t,x)\in\Sigma \,| \,t\leq\tau\}.
\]
Observe that $\Sigma_{\tau}$ is contained in a slab of width $\tau-a$. Moreover, since $\p$ is compact and the 
immersion is proper, $\Sigma_{\tau}$ is compact with boundary contained in $\p_{\tau}$. Furthermore, it satisfies the assumptions of 
Theorem \ref{Theorem3.9AD} and we can conclude that $\Sigma_{\tau}$ is contained in a slab of width $\log\big(H_1/(H_1-1)\big)$. 
Choosing $\tau$ sufficiently large we violate this estimate and get to a contradiction. 
Finally, if $H_1\geq1$ we obtain a contradiction again since Theorem \ref{Theorem3.9AD} implies that $h\geq\tau$.  
\end{proof}
The previous theorem extends to hypersurfaces of constant higher order mean curvature as follows.
\begin{theorem}\label{thmendhk}
Let $\Sigma$ be a non-compact hypersurface (without boundary) properly immersed in a pseudo-hyperbolic space 
$\erre\times_{\mathrm{e}^t}\p^n$ with compact fiber $\p^n$ satisfying $K_\p\geq 0$. Assume that $\Sigma$ has constant $k$-mean 
curvature $H_k> 1$, $2\leq k\leq n$, that there exists an elliptic point if $k\geq3$ and that the angle function $\Theta$ does 
not change sign. 
Then the hypersurface can not lie in a half-space. In particular, $\Sigma$ must have at least one top and one bottom end. 
\end{theorem}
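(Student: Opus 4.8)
The plan is to mimic the structure of the proof of Theorem~\ref{thmendh}, replacing the use of Theorem~\ref{Theorem3.9AD} with its higher order analogue Theorem~\ref{ocho}. First I would suppose, for the sake of contradiction, that $\Sigma$ lies in a half-space; by symmetry of the two cases, let me treat $\Sigma\subset(-\infty,a]\times\p^n$ for some $a\in\erre$. For every $\tau<a$ set
\[
\Sigma_\tau=\{(t,x)\in\Sigma\,|\,t\geq\tau\},
\]
which, since the fiber $\p^n$ is compact and the immersion is proper, is a compact hypersurface with boundary $\partial\Sigma_\tau\subset\p_\tau$ and is contained in the slab between $\p_\tau$ and $\p_a$, hence of width at most $a-\tau$.

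The key point is that $\Sigma_\tau$ inherits all the hypotheses of Theorem~\ref{ocho}: it has constant $k$-mean curvature $H_k>1$, its angle function $\Theta$ does not change sign (this is inherited from $\Sigma$), the fiber satisfies $K_\p\geq0$, and, when $k\geq3$, the existence of an elliptic point on $\Sigma$ can be transferred to $\Sigma_\tau$ provided $\tau$ is chosen small enough that the elliptic point belongs to $\Sigma_\tau$. Applying Theorem~\ref{ocho} to $\Sigma_\tau$ with boundary on $\p_\tau$ yields $\tau\leq h\leq\tau+C$ on $\Sigma_\tau$, where $C=\log\big(H_k^{1/k}/(H_k^{1/k}-1)\big)$. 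Thus $\Sigma_\tau$ lies in a slab of width at most $C$, which forces $a-\tau\leq C$. Letting $\tau\to-\infty$ violates this bound, a contradiction.

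The case $\Sigma\subset[a,+\infty)\times\p^n$ is handled symmetrically: for $\tau>a$ one sets $\Sigma_\tau=\{(t,x)\in\Sigma\,|\,t\leq\tau\}$, again compact with boundary on $\p_\tau$, and the height estimate of Theorem~\ref{ocho} (here the lower bound $h\geq\tau$ is the relevant one, since $H_k>1$) together with the slab containment shows $\Sigma_\tau$ cannot extend arbitrarily far below $\p_\tau$; alternatively the inequality $h\geq\tau$ itself already forbids $\Sigma$ from containing points below $\p_a$ unless $\Sigma_\tau$ is empty, so taking $\tau$ large gives the contradiction. Either way, $\Sigma$ lies in no half-space, and since a proper immersion of a non-compact $\Sigma$ forces the height function to be unbounded, $\Sigma$ must have points with arbitrarily large and arbitrarily negative height, i.e.\ at least one top and one bottom end.

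The only delicate step is the transfer of the elliptic point hypothesis to the truncated pieces $\Sigma_\tau$ when $k\geq3$: one must make sure the elliptic point of $\Sigma$ survives into $\Sigma_\tau$, which is automatic once $\tau$ is below the height of that point, so it causes no real difficulty in the limit $\tau\to-\infty$; in the upper half-space case one simply needs the elliptic point to lie in $\{t\leq\tau\}$, which holds for $\tau$ large. Apart from this bookkeeping, the argument is a routine slab-exhaustion built on the sharp height estimate.
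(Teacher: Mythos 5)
Your proposal follows the paper's own proof of Theorem~\ref{thmendhk} almost verbatim: truncate by $\Sigma_\tau=\{t\geq\tau\}$ (resp.\ $\{t\leq\tau\}$), observe the truncation is compact with boundary in $\p_\tau$ because the fiber is compact and the immersion is proper, apply the sharp slab estimate of Theorem~\ref{ocho}, and push $\tau\to\pm\infty$. The additional remark about making the elliptic point land inside $\Sigma_\tau$ when $k\geq3$ is a correct bit of bookkeeping that the paper leaves implicit, and the argument is sound. One small slip in the second case: you write that $h\geq\tau$ ``forbids $\Sigma$ from containing points below $\p_a$,'' but what it actually forbids is points of $\Sigma_\tau$ strictly below $\p_\tau$; since $\Sigma_\tau$ is by construction the part of $\Sigma$ at or below height $\tau$, the estimate forces $\Sigma_\tau\subset\p_\tau$, and choosing $\tau$ larger than the height of any fixed point of $\Sigma$ gives the contradiction. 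This is the reasoning the paper also invokes (tersely), so there is no substantive gap.
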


\begin{proof}
Suppose
that $\Sigma$ lies in a half-space of the form $(-\infty,a]\times\p^n$, $a\in\erre$. 
For any $\tau\in\erre$, $\tau<a$, denote by $\Sigma_\tau$ the hypersurface
\[
\Sigma_\tau=\{(t,x)\in\Sigma \, |\, t\geq\tau\}.
\]
Observe that $\Sigma_\tau$ is contained in a slab of width $a-\tau$. Furthermore, since $\p$ is compact and the immersion is proper, 
$\Sigma_\tau$ is compact with boundary contained in $\p_\tau$. Moreover, it satisfies the assumptions of Theorem \ref{ocho} and we
conclude that $\Sigma_\tau$ is contained in a slab of width $\log\big(H_k^{1/k}/(H_k^{1/k}-1)\big)$. Choosing $\tau$ sufficiently small
we violate this estimate, and get a contradiction. 
On the other hand, if $\Sigma$ is contained in a half-space of the form $[a,+\infty)\times\p^n$, then for any $\tau\in\erre$, $\tau>a$, 
we introduce the hypersurface $\Sigma_\tau$ defined by
\[
\Sigma_\tau=\{(t,x)\in\Sigma \, |\, t\leq\tau\}.
\]
Thus $\Sigma_\tau$ is a compact hypersurface with boundary on $\p_\tau$ and contained in a slab of width $\tau-a$. 
Theorem \ref{ocho} imply then that $h\geq\tau$ giving a contradiction.  
\end{proof}

Applying Proposition \ref{proptanprin}, we shall now prove the following result on the topology at infinity of
hypersurfaces of constant mean curvature in a pseudo-hyperbolic space $\erre\times_{\cosh t}\p^n$.
\begin{theorem}
Let $\Sigma$ be a non-compact hypersurface (without boundary) of constant mean curvature, 
properly immersed in a pseudo-hyperbolic space $\erre\times_{\cosh t}\p^n$ with compact fiber $\p^n$.
\begin{enumerate}
\item if $H_1\geq1$, then $\Sigma$ cannot lie in an upper half-space, that is, $\Sigma$ must have at least a bottom end;
\item if $H_1\leq-1$, then $\Sigma$ cannot lie in a lower half-space, that is, $\Sigma$ must have at least a top end.
\end{enumerate}
\end{theorem}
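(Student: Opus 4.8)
The plan is to run the truncation argument from the proof of Theorem~\ref{thmendh}, using in place of the two-sided estimate of Theorem~\ref{Theorem3.9AD} the one-sided conclusions of Proposition~\ref{proptanprin}. In the case at hand, $k=1$, those conclusions require neither an elliptic point, nor a sign hypothesis on $\Theta$, nor a lower bound on $\ricc_\p$, which is why the statement carries no such assumption. The key elementary observation is that for $\rho(t)=\cosh t$ one has $\mathcal{H}_1(t)=\tanh t$, so that
\[
\sup_{(-\infty,\tau]}\mathcal{H}_1=\inf_{[\tau,+\infty)}\mathcal{H}_1=\tanh\tau\in(-1,1)\qquad\text{for every }\tau\in\erre;
\]
hence the comparisons $H_1\geq1>\sup_{(-\infty,\tau]}\mathcal{H}_1$ in case $(1)$ and $H_1\leq-1<\inf_{[\tau,+\infty)}\mathcal{H}_1$ in case $(2)$ hold at every level $\tau$, and the truncation level may be chosen freely.

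To prove $(1)$ I would argue by contradiction, assuming $f(\Sigma)\subset[a,+\infty)\times\p^n$ for some $a\in\erre$, so that the height function $h$ is bounded below by $a$; since $\Sigma$ is non-compact, $f$ is proper and $\p^n$ is compact, $m:=\inf_\Sigma h$ belongs to $[a,+\infty)$. Fixing a regular value $\tau$ of $h$ with $\tau>m$, set
\[
\Sigma_\tau=\{p\in\Sigma:h(p)\leq\tau\}=f^{-1}\bigl([a,\tau]\times\p^n\bigr),
\]
which is compact by properness, has boundary contained in the slice $\p_\tau$ because $\tau$ is a regular value, and is a proper subset of the connected non-compact $\Sigma$, so that each of its connected components is a compact constant mean curvature hypersurface with non-empty boundary in $\p_\tau$. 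Applying Proposition~\ref{proptanprin}$(ii)$ to each such component — admissible since $H_1\geq1>\sup_{(-\infty,\tau]}\mathcal{H}_1$ and $k=1$ — yields $h\geq\tau$ on $\Sigma_\tau$, whereas by construction $h$ takes values less than $\tau$ on $\Sigma_\tau$ (because $\inf_\Sigma h=m<\tau$), a contradiction. Hence $\Sigma$ cannot lie in an upper half-space.

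For $(2)$ I would reduce to $(1)$ through the isometry $\iota(t,x)=(-t,x)$ of $\erre\times_{\cosh t}\p^n$, which is well defined because $\cosh$ is even. If $f(\Sigma)\subset(-\infty,a]\times\p^n$, then $\iota\circ f$ is again a proper isometric immersion of the same non-compact $\Sigma$ (without boundary), now with image in the upper half-space $[-a,+\infty)\times\p^n$. Being an isometry, $\iota$ preserves the second fundamental form, so $\iota\circ f$ has the same principal curvatures as $f$, computed with the unit normal $\di\iota(N)$; in particular it has constant mean curvature $H_1\leq-1$ with respect to $\di\iota(N)$, equivalently $-H_1\geq1$ with respect to $-\di\iota(N)$, and part $(1)$ applied to $\iota\circ f$ yields a contradiction. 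Finally, the passage to the statement about ends is routine: failing to lie in an upper, respectively lower, half-space means $\inf_\Sigma h=-\infty$, respectively $\sup_\Sigma h=+\infty$, and properness together with compactness of $\p^n$ then yields a sequence leaving every compact subset of $\Sigma$ along which $h\to-\infty$, respectively $h\to+\infty$, that is, a bottom, respectively top, end.

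I do not expect a genuine obstacle: the result is essentially a corollary of Proposition~\ref{proptanprin}. The only points needing attention are checking that the truncations fall under its hypotheses — taken care of by properness, compactness of the fiber, and the choice of a regular value of $h$ — and noting that no constant degenerates here, since $|\mathcal{H}_1|<1$ identically for the $\cosh$ warping; the delicate sign bookkeeping in the tangency comparison has already been absorbed into Proposition~\ref{proptanprin}, which is why I invoke it as a black box. One could equally avoid truncating: by the same properness argument $h$ attains its minimum on $\Sigma$ at an interior point $q$, and the tangency principle, Theorem~\ref{tanprin}, applied there with the slice $\p_{h(q)}$ as comparison hypersurface — for $k=1$ it imposes no condition on that slice's principal curvatures, and $H_1\geq1>\tanh\bigl(h(q)\bigr)=\mathcal{H}_1\bigl(h(q)\bigr)$ — gives the contradiction directly, and symmetrically at a maximum of $h$ in case $(2)$.
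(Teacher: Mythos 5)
Your proof is correct and for part (1) runs the same truncation argument as the paper: cut $\Sigma$ at a level $\tau$ above the putative floor, note $\Sigma_\tau$ is compact with boundary in $\p_\tau$ by properness and compactness of $\p^n$, and invoke Proposition~\ref{proptanprin}$(ii)$ with $H_1\geq 1>\tanh\tau=\sup_{(-\infty,\tau]}\mathcal{H}_1$ to force $h\geq\tau$, a contradiction. For part (2) you diverge: the paper repeats the truncation on the other side and applies Proposition~\ref{proptanprin}$(i)$ directly, using $H_1\leq-1<\tanh\tau=\inf_{[\tau,+\infty)}\mathcal{H}_1$ (the extra hypotheses on $\mathcal{H}$ in $(i)$ being irrelevant since $k=1$), whereas you reduce to part (1) through the ambient isometry $\iota(t,x)=(-t,x)$, available because $\cosh$ is even. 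Both routes are valid; the isometry trick is slightly cleaner in that it avoids re-checking the hypotheses of part $(i)$ of the proposition, while the paper's route is more uniform with the way the analogous $\erre\times_{\mathrm{e}^t}\p^n$ theorem is proved, where no such reflection symmetry exists. Your additional care in choosing $\tau$ to be a regular value of $h$ and applying the proposition component by component tightens details the paper leaves implicit; neither affects the substance.
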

\begin{proof}
Assume first that $H_1\geq 1$ and suppose by contradiction that $\Sigma$ lies in an upper half-space $[a,+\infty)\times\p^n$. For a fixed $\tau>a$ consider the hypersurface
\[
\Sigma_\tau=\{(t,x)\in\Sigma \ |\ t\leq \tau\}.
\]
Then $\Sigma_\tau$ is a compact hypersurface of constant mean curvature $H_1$ and with boundary contained in the slice $\p_\tau$. Since
\[
\sup_{(-\infty,\tau]}\mathcal{H}_1=\sup_{t\in(-\infty,\tau]} \tanh t=\tanh \tau, 
\] 
it follows that $H_1\geq1\geq\tanh \tau$ and we can apply Proposition \ref{proptanprin} to obtain that $h\geq\tau$, leading to a 
contradiction. 

On the other hand, if $H_1\leq-1$, assume by contradiction that $\Sigma$ lies in a lower half-space $(-\infty,a]\times\p^n$. 
Consider the hypersurface
\[
\Sigma_\tau=\{(t,x)\in\Sigma \ |\ t\geq \tau\}.
\]
for a fixed $\tau<a$. 
Then $\Sigma_\tau$ is a compact hypersurface of constant mean curvature $H_1$ and with boundary contained in the slice $\p_\tau$.
Moreover
\[
H_1\leq-1\leq\tanh \tau=\inf_{t\in[\tau,+\infty)}\tanh t=\inf_{[\tau,+\infty)}\mathcal{H}_1, 
\] 
and we conclude the proof as above applying Proposition \ref{proptanprin}.
\end{proof}
Finally, the previous theorem can be extended to the case of constant $k$-mean curvature as follows:
\begin{theorem}
Let $\Sigma$ be a non-compact hypersurface (without boundary) of constant positive $k$-mean curvature, 
$k\geq2$, properly immersed in a pseudo-hyperbolic space $\erre\times_{\cosh t}\p^n$ with compact fiber $\p^n$. If $k>2$ assume that there exists an 
elliptic point of $\Sigma$. 
If $H_k\geq1$ then $\Sigma$ cannot lie in an upper half-space, that is, $\Sigma$ must have at least a bottom end.
\end{theorem}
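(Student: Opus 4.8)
The plan is to argue by contradiction exactly as in the preceding theorem (the case $H_1\ge 1$) and in the proof of Theorem~\ref{thmendhk}, with part~$(ii)$ of Proposition~\ref{proptanprin} playing the role that the slab estimate plays there; in particular no assumption on the curvature of $\p^n$ is needed, since only the tangency principle enters. Suppose, then, that $\Sigma$ is contained in an upper half-space $[a,+\infty)\times\p^n$. Since $f$ is proper and the fiber $\p^n$ is compact, the height function $h=\pi_{I}\circ f\colon\Sigma\ra[a,+\infty)$ is a proper map; it is unbounded above because $\Sigma$ is non-compact, and $h_*:=\inf_\Sigma h$ is finite with $h_*\ge a$. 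Fix a regular value $\tau$ of $h$ with $\tau>h_*$ and, when $k\ge 3$, also with $\tau$ larger than the height of a fixed elliptic point of $\Sigma$. Then
\[
\Sigma_\tau=\{p\in\Sigma\tc h(p)\le\tau\}=f^{-1}\bigl([a,\tau]\times\p^n\bigr)
\]
is a non-empty compact hypersurface with boundary, with $f(\partial\Sigma_\tau)\subset\p_\tau$, constant $k$-mean curvature $H_k$, and non-empty interior $\{h<\tau\}$.

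Next I would check that a suitable connected component of $\Sigma_\tau$ meets the hypotheses of part~$(ii)$ of Proposition~\ref{proptanprin} with $\rho(t)=\cosh t$, so that $\mathcal H(t)=\tanh t$ and $\mathcal H_k(t)=(\tanh t)^k$. The elementary point is the computation
\[
\sup_{(-\infty,\tau]}\mathcal H_k=
\begin{cases}
(\tanh\tau)^k & k\ \text{odd},\\
1 & k\ \text{even},
\end{cases}
\]
the value $1$ being the (non-attained) limit as $t\to-\infty$, since $t\mapsto(\tanh t)^k$ is $U$-shaped for $k$ even; in either case this supremum is $\le 1\le H_k$, so the main inequality required by part~$(ii)$ holds. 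The remaining hypothesis splits on the parity of $k$: if $k=2$ then $H_2=H_k\ge 1>0$; if $k\ge 3$ the chosen $\tau$ places the elliptic point of $\Sigma$ in the interior of $\Sigma_\tau$, and one passes to the connected component $\Sigma_\tau'$ of $\Sigma_\tau$ containing it (recall that hypersurfaces are tacitly connected throughout this paper). Such a component still has non-empty boundary: a boundaryless component of $\Sigma_\tau$ would be open and closed in $\Sigma$, hence all of the connected $\Sigma$, but it is compact whereas $\Sigma$ is not.

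Applying part~$(ii)$ of Proposition~\ref{proptanprin} to this compact hypersurface then yields $h\ge\tau$ on it; combined with $h\le\tau$ there by construction, this forces $h\equiv\tau$, i.e.\ $f$ maps it into the slice $\p_\tau$. This is absurd: the hypersurface has non-empty interior and hence contains points with $h<\tau$ (and, when $k\ge 3$, it contains the elliptic point, whose height is strictly less than $\tau$). The contradiction shows that $\Sigma$ cannot lie in any upper half-space; equivalently $h$ is unbounded below, so $\Sigma$ has at least one bottom end.

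I do not foresee a genuine obstacle: the substantive input is entirely contained in the tangency principle (Theorem~\ref{tanprin}) and in Proposition~\ref{proptanprin}, and what remains is routine — choosing $\tau$ appropriately, passing to a connected component, and the elementary estimate for $\sup_{(-\infty,\tau]}(\tanh t)^k$. The one place calling for a little care is precisely this last estimate and its parity split, which is what makes the single assumption $H_k\ge 1$ sufficient, just as $H_1\ge 1$ was in the mean-curvature case of the previous theorem.
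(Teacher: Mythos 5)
Your proof is correct and follows essentially the same route as the paper's: restrict to $\Sigma_\tau=\{h\le\tau\}$ for a suitable $\tau>a$, observe it is compact with boundary on $\p_\tau$ and inherits the constant $k$-mean curvature, note $\sup_{(-\infty,\tau]}\mathcal{H}_k\le 1\le H_k$, and apply part $(ii)$ of Proposition~\ref{proptanprin} to get $h\ge\tau$, a contradiction. You are slightly more careful than the paper at a few routine points (taking $\tau$ a regular value above the elliptic point when $k\ge3$, and passing to a connected component of $\Sigma_\tau$), but these are just tidying up the same argument, not a different approach.
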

\begin{proof}
Assume 
that $\Sigma$ lies in an upper half-space $[a,+\infty)\times\p^n$. Since $H_k\geq1$ and there exists an elliptic point on $\Sigma$, we have that
each $H_j$ is positive on $\Sigma$ and each $L_j$ is elliptic for any $1\leq j\leq k-1$. For a fixed $\tau>a$ consider the hypersurface
\[
\Sigma_\tau=\{(t,x)\in\Sigma \ |\ t\leq \tau\}.
\]
Then $\Sigma_\tau$ is a compact hypersurface of constant $k$-mean curvature $H_k\geq1$ and with boundary contained in the slice $\p_\tau$. Moreover, since
\[
\sup_{(-\infty,\tau]}\mathcal{H}_k=\sup_{t\in(-\infty,\tau]} (\tanh t)^k\leq1\leq H_k
\] 
we can apply Proposition \ref{proptanprin} and obtain that $h\geq\tau$, which leads to a contradiction. 
\end{proof}

\bigskip

\bibliographystyle{amsplain}
\bibliography{biblioSHE}

\end{document}